\newtheorem{theorem}{Theorem}
\newtheorem{proposition}[theorem]{Proposition}
\newtheorem{lemma}[theorem]{Lemma}
\theoremstyle{definition}
\newtheorem{definition}[theorem]{Definition}
\theoremstyle{remark}
\newtheorem{remark}[theorem]{Remark}
\newtheorem{example}[theorem]{Example}
\def\R{\mathbb{R}}
\def\l{\lambda}
\DeclareMathOperator{\ran}{ran}
\begin{document}

 %amsart format
\title[]%
{On a geometric combination of functions\\
related to Pr\'ekopa-Leindler inequality }%

\author[G.~Crasta, I.~Fragal\`a]{Graziano Crasta,  Ilaria Fragal\`a}

\address[Graziano Crasta]{Dipartimento di Matematica ``G.\ Castelnuovo'',
Sapienza University of Rome\\
P.le A.\ Moro 5 -- 00185 Roma (Italy)}
\email{graziano.crasta@uniroma1.it}

\address[Ilaria Fragal\`a]{
Dipartimento di Matematica, Politecnico\\
Piazza Leonardo da Vinci, 32 --20133 Milano (Italy)
}
\email{ilaria.fragala@polimi.it}

\keywords{Pr\'ekopa-Leindler inequality, geometric mean, inverse distribution function.}

\subjclass[2010]{39B62, 52A40, 52A20}

\date{April 25, 2022}

\begin{abstract} We introduce 
a new operation  between nonnegative integrable functions on $\R ^n$, that we call {\it geometric combination}; it is obtained via a mass transportation approach, playing with inverse distribution functions. The main feature of this operation is that the Lebesgue integral of the geometric combination equals the geometric mean of the two separate integrals;  as a natural consequence, we derive a new  functional inequality of Pr\'ekopa-Leindler type.  
When applied to the characteristic functions of two measurable sets,  their geometric combination provides  a set  whose volume equals the geometric mean of the two separate volumes.  
In the framework  of convex bodies, by comparing the  geometric combination with the $0$-sum, we get an alternative proof of the log-Brunn-Minkowski inequality for  unconditional convex bodies  and for convex bodies with $n$ symmetries. \end{abstract}

\maketitle

\section{Introduction}%%%%%%%%%%%%%%%%%%%%%%%
The classical Pr\'ekopa-Leindler inequality \cite{Le,Pr1,Pr2,Pr3}  states that, given two functions $f, g \in L ^ 1 (\R ^n; \R _+)$ and a parameter $\lambda \in [0,1]$,
for any measurable function $h: \R ^n \to  \R _+$ which satisfies 
\begin{equation}\label{puntuale} h \big ((1- \lambda ) x + \lambda y \big ) \geq f(x) ^ {1 - \lambda} g ( y ) ^ \lambda \qquad \forall x, y \in \R ^n\ ,
\end{equation}
it holds that
\begin{equation}\label{f:PLclassic}
\int _{\R ^n} h \geq \Big ( \int _{\R ^n} f   \Big ) ^ {1- \lambda}
\Big ( \int _{\R ^n} g   \Big ) ^ {\lambda}  
\end{equation}
(see  also  \cite{Bar,BoCoFr,BrLi,Ga,Mar,Vi}  for several
extensions and applications). 
Inequality \eqref{f:PLclassic} is 
commonly considered as a  ``functional form'' of  the Brunn-Minkowski inequality for the $n$-dimensional volume. 
Indeed, when $f$ and $g$ are the characteristic functions of two measurable sets $K$ and $L$ in $\R^n$, the smallest function $h$ satisfying inequality \eqref{puntuale} (named $\lambda$-supremal convolution of $f$ and $g$) agrees with the characteristic function of the set $( 1-\lambda) K + \lambda L$.  Thus \eqref{f:PLclassic} yields the multiplicative form of Brunn-Minkowski inequality 
\begin{equation}\label{f:BM} |( 1-\lambda) K + \lambda L |\geq |K| ^ { 1- \l} |L| ^ \l \,,
\end{equation}
which is easily seen to be equivalent to the additive form, namely to the $(1/n)$-concavity of volume under Minkowski addition.

It is well-know that  equality in 
\eqref{f:PLclassic} occurs if and only if $f (x) = g ( x+b)$ for a log-concave function $g$ and a constant vector $b$ (see \cite{dubuc}), and in \eqref{f:BM} if and only if  $K$ and $L$ are homothetic convex bodies \cite{Sch}. 

When these conditions are far from being satisfied, estimates \eqref{f:PLclassic}  and \eqref{f:BM} may be very rough. This is one of the motivations for the investigation on one hand of quantitative versions of the inequalities \cite{BB10,FMP09,  BB11, BF14, BD21, XiLeng}, and on the other hand of possible  different operations between functions (in the analytic framework) or sets (in the geometric one), still allowing to bound from below respectively the Lebesgue integral or the volume by the geometric mean of the corresponding quantities. 

In the functional setting,  a new inequality of  Pr\'ekopa-Leindler type  has been recently obtained in  \cite{ArFlSe}: it is based on the idea of replacing 
the usual supremal convolution by a kind of geometric supremal convolution,  and still implies the Brunn-Minkowski inequality. 

In the geometric setting,  
a closely related inequality which is actually widely open and is among the most relevant questions under study in Convex Geometry is the log-Brunn-Minkowski inequality: stated within the class of centrally symmetric convex sets, the conjecture reads
\begin{equation}\label{f:logBM}|( 1-\lambda) \cdot K +_0 \lambda \cdot  L |\geq |K| ^ { 1- \l} |L| ^ \l \,,
\end{equation}
 where, denoting by $h _K$ and $h _L$ the support functions of $K$ and $L$, 
 $$( 1-\lambda) \cdot K +_0 \lambda \cdot  L := \Big \{ x \in \R ^n \ :\ x \cdot \xi \leq h _ K ( \xi) ^ { 1- \l} h _ L (\xi) ^ {\l} \quad \forall \xi \in S ^ { n-1} \Big \} \,.$$
Since the $0$-Minkowski combination $( 1-\lambda) \cdot K +_0 \lambda \cdot  L$ is contained into $( 1-\lambda) K + \lambda L$, inequality \eqref{f:logBM} is clearly a strengthening of \eqref{f:BM} (whose relevance is also related to the uniqueness of the convex body with a prescribed cone volume measure, see \cite{BLYZ}). 
Up to now, the  log-Brunn-Minkowski conjecture has been proved just in some special cases, including: planar bodies \cite{BLYZ}, unconditional bodies \cite{Boll, cordero, Sar}, bodies with symmetries \cite{BoroKala}, complex bodies \cite{Rot}; local versions have been studied in \cite{KolMil, ColLivMar, ColLiv}.

To the best of our knowledge, no functional version of \eqref{f:logBM} has been proposed up to now. There is just a  related functional inequality, 
namely the so-called multiplicative Pr\'ekopa-Leindler inequality for functions on $\R ^n _+ := ( \R _+ ) ^ n$, which can be easily deduced from the classical one via an exponential change of variables: given  $f, g \in L ^ 1 (\R ^n_+; \R _+)$ and a parameter $\lambda \in (0,1)$,
for any measurable function $h: \R ^n_+ \to  \R _+$ which satisfies
\begin{equation}\label{puntuale2} h \big ( x_ 1 ^ { 1 - \l} y _ 1 ^ \l , \dots, x_ n ^ { 1 - \l} y _ n ^ \l   \big ) \geq f(x) ^ {1 - \lambda} g ( y ) ^ \lambda \qquad \forall x, y \in \R ^n _+\ ,
\end{equation}
it holds that
\begin{equation}\label{f:PLmultiplicative}
\int _{\R ^n_+} h \geq \Big ( \int _{\R ^n_+} f   \Big ) ^ {1- \lambda}
\Big ( \int _{\R ^n_+} g   \Big ) ^ {\lambda} \, 
\end{equation}
(see \cite{uhrin, ballLN, borellLN}).
When $f$ and $g$ are the  characteristic functions of two unconditional sets $K$ and $L$,  the smallest function $h$ satisfying \eqref{puntuale2} is the characteristic function of the  product body 
$$K ^ { 1- \l} \cdot  L ^ \l:= \Big \{ z \in \R ^n \ :\ \forall i = 1, \dots, n, \ |z_i| = |x_i| ^ { 1- \l} |y_i| ^ {\l} \text{ for some } x \in K\, , \ y \in L \Big \}\,.$$ 
Hence \eqref{f:PLmultiplicative} gives $|K ^ { 1- \l} \cdot  L ^ \l | \geq |K| ^ { 1- \l} | L | ^ \l$, and  the log-Brunn-Minkowski inequality readily follows since the product body is contained into the $0$-sum.

In this paper we present a different construction: it stems in the functional analytic setting, 
where it yields a new inequality of Pr\'ekopa-Leindler type, 
 and reflects in the geometric one, where it yields a new concept of geometric mean of sets. 

Given two nonnegative integrable functions $f$ and $g$ on $\R ^n$, and a parameter $\lambda \in [0, 1]$, we introduce a new function $f \star_\l g$, that we call {\it geometric combination of $f$ and $g$} (in proportion $\lambda$), whose   Lebesgue  integral 
is equal to  the geometric mean of the integrals of $f$ and $g$.  
As a straightforward natural consequence, in order to have \eqref{f:PLclassic}, it is sufficient that $h$ is minorated almost everywhere by $f \star_\l g$.

The reason why we can handle functions defined on the whole space $\R ^n$, and not merely on  $\R ^ n _+$ as in the multiplicative Pr\'ekopa-Leindler inequality,   is that 
the construction of  $f \star_\l g$ does not involve the
geometric  mean of the variable's components  appearing in~\eqref{puntuale2}, but rather
the geometric mean of intrinsically positive quantities associated with $f$ and $g$. 
In one space dimension, such positive quantities are precisely the absolutely continuous parts of the  derivatives of the  inverse distribution functions of $f$ and $g$; in higher dimensions, the same procedure can be iterated by arguing along a prescribed family of linearly independent directions. 
In fact the proof strategy  is essentially one-dimensional, and  is 
of mass transportation type: it
incorporates the use of distribution functions originally due to Barthe (see \cite[Thm.\ 2.13]{ledoux}) with the construction of the Knothe map  (\cite{knothe}, see also \cite[p.372]{Sch}).

Moving attention from functions to sets, 
when $f$ and $g$ are the  characteristic functions of two measurable sets $K$ and $L$,  their geometric combination
agrees with the characteristic function of a measurable set, denoted by $K \star_\l L$, such that
\begin{equation}\label{f:keyeq}
| K \star_\l L | = |K| ^ { 1 - \l} |L| ^ \l \,.
\end{equation}

To the best of our knowledge, 
this way of ``geometrically combining'' two sets so that the equality \eqref{f:keyeq}  holds, is
completely new.  
Actually, several attempts exist in the literature to define some notion of geometric mean of sets, in particular of convex bodies. Besides the $0$-sum mentioned above, let us recall the dual version of the $0$-sum considered by Saroglou \cite{Sar2},  
 the classical notion of complex interpolation studied in Banach geometry \cite{Ber},  the PDE approach introduced by Cordero-Erausquin and Klartag \cite{CorKla}, the non-standard construction proposed by V.~Milman and Rotem \cite{MilRot}.  A more detailed description about each of these constructions, along with additional references, can be found in \cite{MilRot}.

While in all these cases a major concern is getting a volume estimate for  the geometric mean body,  
from this point of view   the behavior of our geometric combination is of striking simplicity, as the equality \eqref{f:keyeq} holds.  Thus, it is natural to wonder about possible relationships with the log-sum, and particularly with the log-Brunn-Minkowski conjecture. 

In this direction we are able to show that, 
 when $K$ and $L$ are unconditional convex bodies, their geometric combination $K \star_\l L$   with respect to the coordinate axes contains their $0$-combination. 
The same inclusion occurs for the class 
of convex bodies with $n$ symmetries considered in \cite{BoroKala, barcor, barfra},  provided one works with the 
natural family of directions suggested by the shapes of $K$ and $L$. 
Thus, both for unconditional bodies and for bodies with $n$ symmetries, we obtain an alternative proof of the log-Brunn-Minkowski inequality. 

Getting farther-reaching implications of our approach in the log-Brunn-Minkowski conjecture for wider classes of convex bodies remains an intriguing open question:  though  in principle  we are not fatally limited to deal with sets with special symmetries, in order to handle arbitrary sets the main difficulty seems to understand how to choose the  directions and the primitives involved in our construction.  

The paper is organized  as follows.  In Section \ref{sec:functional} we introduce
the geometric combination of functions and prove its integral property, for the sake of clarity first in one dimension (see Theorem \ref{t:1D}) and then in $n$-dimensions (see Theorem \ref{t:nD}).  In Section \ref{sec:geometric} 
we turn attention to the geometric combination of centrally symmetric convex sets, and we show that this new  operation  seems to satisfy some good properties: 
in  Section \ref{sec:convexity}  we establish the convexity preserving property (in dimension $n = 1, 2$), and we exhibit some explicit examples of geometric combinations; 
in Sections \ref{sec:unconditional}  and \ref{sec:symmetries} we deal  respectively with unconditional convex bodies  and convex bodies with $n$ symmetries and we show that, in such classes, the comparison with  the $0$-sum yields, via Theorem \ref{t:nD}, an alternative proof of the log-Brunn-Minkowski inequality. 
Finally, in Section \ref{sec:open}, we give a short list of related open problems.

\bigskip{\bf Acknowledgments.} 
We are grateful to Andrea Colesanti, Galyna Livshyts, and Emanuel Milman for some stimulating discussions
and helpful comments.

\section{Geometric combination of functions}\label{sec:functional}

\subsection{The one dimensional case}

\begin{definition}
Let $f$ be a nonnegative, integrable function of one real variable,  with  strictly positive integral.  
The \textit{inverse distribution function (shortly i.d.~function) of $f$}  is the 
generalized inverse  of
the absolutely continuous non-decreasing function
\begin{equation}\label{f:F}
F(x) := \frac{1}{\int_{\R} f} \int_{-\infty}^x f(t)\, dt\,,
\qquad x\in\R\, , 
\end{equation} 
namely 
\[
u (t):= \inf \Big \{ s \in \R \ :\ \int _{- \infty} ^ s f (x) \, dx > t \int _ \R f \Big \}\,, \qquad t \in (0, 1)\,.
\] 
\end{definition}

\begin{lemma} \label{l:idf} 
Let $f$ and $F$ be as in the above definition, and let $u$ be the i.d.~function of $f$. Then: 
 \begin{itemize}
\item[(i)] $u$ is finite valued, right continuous and strictly increasing (possibly unbounded)  in $(0,1)$;

\smallskip
\item[(ii)] $F(u(t)) = t$  for every  $t \in (0,1)$; 

\smallskip
\item[(iii)] For $\mathcal L^ 1$-a.e.\  $t\in (0,1)$, $u$ is differentiable at $t$,
$F$ is differentiable at $u(t)$, and it holds that
$F'(u(t))\, u'(t) = 1$, or equivalently
\begin{equation}\label{f:derivata}
f ( u (t)) u' ( t) = \int _\R f ( x) \, dx \qquad \text{ for $\mathcal L^ 1$-a.e. } t \in (0, 1)\,.
\end{equation}
In particular, we have that $f(u(t)) > 0$ and $u'(t) > 0$ for $\mathcal L^ 1$-a.e.\ $t\in (0,1)$.
\end{itemize} 
\end{lemma}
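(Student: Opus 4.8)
The plan is to derive everything from the structural fact that, since $f$ is nonnegative and integrable, the function $F$ in \eqref{f:F} is continuous, non-decreasing, with $\lim_{x\to-\infty}F(x)=0$ and $\lim_{x\to+\infty}F(x)=1$. First I would record (i) and (ii). For fixed $t\in(0,1)$ the super-level set $\{s:F(s)>t\}$ is nonempty (as $F\to1$) and bounded below (as $F\to0$), so $u(t)$ is finite; being the preimage $F^{-1}((t,+\infty))$ of an open set under a continuous non-decreasing function, it is an open half-line, which by definition of $u$ must be exactly $(u(t),+\infty)$. From $F(s)>t$ for every $s>u(t)$ together with $F(s)\le t$ for every $s\le u(t)$, the continuity of $F$ immediately yields $F(u(t))=t$, which is (ii). The same description gives strict monotonicity: if $u(t_1)=u(t_2)=a$ then $t_1=F(a)=t_2$, while monotonicity of $u$ is clear since $\{F>t_2\}\subseteq\{F>t_1\}$ for $t_1<t_2$. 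Right continuity is the standard feature of this ``inf of a strict super-level set'' inverse: for $t_n\downarrow t_0$ one has $u(t_n)\ge u(t_0)$, and picking any $s>u(t_0)$ the inequality $F(s)>t_0$ forces $F(s)>t_n$ for $n$ large, hence $u(t_n)\le s$; letting $s\downarrow u(t_0)$ gives $u(t_n)\to u(t_0)$.

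For (iii) the two one-sided ingredients are immediate: $u$ is monotone, hence by Lebesgue's differentiation theorem $u'(t)$ exists and is finite for a.e.\ $t\in(0,1)$; and $F$ is absolutely continuous, so by the fundamental theorem of calculus $F$ is differentiable with $F'=f/\int_\R f$ outside a Lebesgue-null set $N\subset\R$. The crux is to guarantee that $F$ is differentiable \emph{at the point $u(t)$} for a.e.\ $t$, i.e.\ that $u$ does not carry a set of positive measure into $N$. The key tool I would use is the push-forward measure $u_\#\mathcal L^1|_{(0,1)}$. Using the half-line description above, for every $x$ the set $\{t\in(0,1):u(t)\le x\}$ coincides, up to its endpoint, with $(0,F(x))$: indeed $t<F(x)$ gives $x\in\{F>t\}$ hence $u(t)\le x$, while $t>F(x)$ gives $u(t)\ge x$ with equality excluded by (ii). Therefore $u_\#\mathcal L^1$ has distribution function $F$, that is $u_\#\mathcal L^1=\frac{f}{\int_\R f}\,\mathcal L^1$. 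Consequently $\mathcal L^1(u^{-1}(N))=u_\#\mathcal L^1(N)=\frac1{\int_\R f}\int_N f=0$, so for a.e.\ $t$ we have $u(t)\notin N$ and $F'(u(t))=f(u(t))/\int_\R f$.

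Finally I would splice the two differentiabilities through the chain rule. On the full-measure set where $u'(t)$ exists and $F$ is differentiable at $u(t)$, the ordinary pointwise chain rule applies, since the inner map $u$ is differentiable (hence continuous) at $t$ and the outer map $F$ is differentiable at the image $u(t)$; thus $t\mapsto F(u(t))$ is differentiable at $t$ with derivative $F'(u(t))\,u'(t)$. Since $F(u(t))=t$ by (ii), this derivative equals $1$, giving $F'(u(t))\,u'(t)=1$, and substituting $F'(u(t))=f(u(t))/\int_\R f$ yields \eqref{f:derivata}. The identity $f(u(t))\,u'(t)=\int_\R f>0$ then forces both $f(u(t))>0$ and $u'(t)>0$ a.e. The main obstacle is precisely the measure-transfer step: a priori a monotone $u$ could send a positive-measure set into the null set $N$, and it is exactly the identity $u_\#\mathcal L^1=\frac{f}{\int_\R f}\,\mathcal L^1$ that rules this out; a minor technicality is to enclose $N$ in a Borel null set so that $u^{-1}(N)$ is measurable.
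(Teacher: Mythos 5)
Your proof is correct, but for part (iii) it runs along a genuinely different route than the paper's. The paper decomposes $\R$ into the (at most countably many) plateau intervals $I_j=\{F=t_j\}$ of positive length, the set $L$ of Lebesgue points of $f$ outside them, and a null set $N$; it then uses the Luzin property of the absolutely continuous $F$ (so $|F(N)|=0$, $|F(\bigcup_j I_j)|=0$) to conclude that $F(L)$ has full measure in $(0,1)$, and observes that for $t\in F(L)$ the point $u(t)$ is a Lebesgue point of $f$, whence $F$ is differentiable there. You instead establish the exact law of $u$, namely $u_\#\mathcal L^1|_{(0,1)}=\frac{f}{\int_\R f}\,\mathcal L^1$ (inverse transform sampling), and pull the bad null set $N$ back along $u$; this handles the plateaus automatically, since the corresponding levels $t_j$ form a countable (hence null) set of parameters, and it dispenses with any separate tracking of $\bigcup_j I_j$. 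Your version is arguably more self-contained --- you also prove (i)--(ii) directly where the paper cites Embrechts--Hofert --- and the pushforward identity is of independent interest, being precisely the mass-transport fact underlying the paper's construction; the paper's version, in exchange, exhibits the good set concretely and shows the stronger pointwise information that $u(t)$ is a genuine Lebesgue point of $f$ there. One remark: your measure-transfer step admits an even shorter variant closer in spirit to the paper's mechanism, since (ii) gives $u^{-1}(N)\subseteq F(N)$ directly, and $|F(N)|=0$ by absolute continuity of $F$; but your computation of the full law, with the Borel-hull technicality you already flag, is complete as written.
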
 

\begin{proof} 
Statements (i)-(ii) follow from the basic properties of generalized inverse functions, see for instance Proposition~1 of \cite{EmHo}. 
The proof of statement (iii) can be achieved as follows.
Since $F$ is (absolutely) continuous and non-decreasing,
for every $t\in (0,1)$ the level set $\{F = t\}$ is a closed interval,
and the family $\{t_j\}\subset (0,1)$ of levels such that
$I_j := \{F = t_j\}$ has positive length is at most countable.
We can decompose $\R$ as the disjoint union $\R =  \big( \bigcup_j I_j \big )  \cup L  \cup N$, where $L$ is the set of Lebesgue points of $f$ in $\R \setminus \bigcup_j I_j$.  
Since $|F(N) | = 0$ and $\big |F\big ( \bigcup_j I_j\big )\big | = 0$,   
we have that  $F (L)$ has full measure in $(0, 1)$; then,  
letting $M\subset F(L)$ be the set of points of differentiability of $u$
in $F(L)$, also $M$ has full measure in $(0,1)$.
If $t\in M$, then $u$ is differentiable at $t$
and $u(t)$ is a Lebesgue point of $f$, so that
$F$ is differentiable at $u(t)$; the relation
$F'(u(t))\, u'(t) = 1$ is now obtained from (ii), 
and \eqref{f:derivata} follows as a direct consequence of the above analysis.
\end{proof}

\begin{remark}\label{r:Cantor} We warn that, in general, the distributional derivative of an i.d.~function may contain  jump and/or Cantor parts as in the two examples hereafter. 

\smallskip
(i) Let 
\begin{equation}\label{f:f1} 
f (x)= (1/2) \chi _{ [0,1] \cup [2, 3]}(x) \,,
\end{equation} 
where $\chi_A$ denotes the characteristic function of a set $A$.
The i.d.~function $u (t)$ equals $2t$ for $t \in [0, 1/2)$ and $2t+1$ for $t \in [1/2, 1]$. 

\smallskip
(ii) Let 
\begin{equation}\label{f:f2} f (x) = F' (x) \, , \text{ with } F (x) = \begin{cases}
0 & \text{ if } x \leq 0 \,,
\\
u ^ { -1} ( x) & \text{ if } 0 < x < 2 \,,
\\
1 & \text{ if } x \geq 2 \,, 
\end{cases}
\end{equation} 
where $u ^ {-1} $ is the inverse of the function $u:(0, 1) \to ( 0, 2)$,  $u ( t):  = t + C ( t) $, $C$ being the Cantor function. 
By construction, the i.d.~function of $f$ is precisely the function $u ( t)$. 
\end{remark}

\begin{definition} Let $u$ and $v$ be the i.d.~functions of two 
nonnegative, integrable functions  with  strictly positive integrals on $\R$, and let $\lambda \in [0,1]$.   We call 
a {\it geometric primitive of $(u, v) $ in proportion $\lambda$} any primitive of $(u') ^ { 1- \l} (v' ) ^ \l$, i.e.\ any function   of the form
\begin{equation*}%\label{f:geoprim}
w _\l ( t) = \int _ {1/2} ^ t u' ( s) ^ {1- \l}  v' ( s) ^ { \l} \, ds + c \qquad \text {with } c \in \R \,.
\end{equation*}
In case $c= 0$ we shall refer to the geometric primitive $w _\l$ as the {\it standard} one. 
\end{definition}

\begin{remark}\label{r:AC} 
The above definition is well-posed and it holds that \begin{itemize}
\item[(i)]  $w _\l \in AC_{loc}(0,1)$ with $w _\l '(t) > 0 \text{ for } \mathcal L ^ 1\text{-a.e.}\ t\in (0,1)$; 
\smallskip 

\item[(ii)] $w_\l$  admits a classical inverse, defined on the interval
$\text{ran}\,  w _\l := \{w_\l (t)\colon t\in (0,1)\}$. 
\end{itemize}
Indeed,  from Lemma \ref{l:idf}   we have that $u', v'$ are strictly positive  $\mathcal L ^ 1\text{-a.e.}$ and belong to $L^1_{loc}(0,1)$. Hence, as a consequence  of H\"older's inequality,  also $(u')^{1-\l} (v')^{\l} $ is in $L^1_{loc}(0,1)$. 
This yields claim (i), which in turn implies (ii). 
\end{remark}

\begin{definition}\label{def:1conv} 
Let $f,g: \R \to \R$ be nonnegative, integrable functions having strictly positive integrals, with i.d.~functions $u,v$  respectively. Let $\l \in [0,1]$, and let 
$w_\l$ be a  geometric primitive  of $(u, v)$ in proportion $\lambda$.  We call the function 
\begin{equation}\label{f:1conv}
f \star_\l g  (x) := 
\begin{cases} f ( u (t)) ^ { 1- \l} g ( v (t)) ^ { \l}  & \text{ if }  x = w_\l ( t), \  t \in (0, 1)\,, 
\\   \noalign{\medskip} 
0 & \text{ otherwise ,} 
\end{cases}
\end{equation}
a \textit{geometric combination of $(f, g)$ in proportion $\lambda$}.  

In case the geometric primitive $w _\l$ in \eqref{f:1conv} is chosen as  standard one, we shall refer to 
$f \star_\l g$  as to the {\it standard} geometric combination of $(f, g)$ in proportion $\lambda$. 
\end{definition}

\begin{remark}  \label{r:well} 
Thanks to Remark \ref{r:AC},  the function  $f \star_\l g $  is well-defined and, denoting by 
$\ran w _\l$ the image of $w _\l$, it
 can be equivalently written as 
\begin{equation}\label{f:recast} 
f \star_\l g  (x) = 
\begin{cases}
f ( u (w_\l  ^ { -1} (x))) ^ { 1- \l} g ( v (w _\l ^ { -1} (x))) ^ { \l} 
&\text{if}\ x\in\ran w _\l\,,
\\
0 &\text{otherwise.}
\end{cases}
\end{equation}
Clearly, the above expression identifies uniquely $f \star _\l g$ up to a translation in the variable $x$, depending on the choice of the geometric primitive $w _ \l$. 
\end{remark}

\begin{theorem}\label{t:1D}
Let $f,g: \R \to \R$ be nonnegative, integrable functions having strictly positive integrals, let $\l \in [0, 1]$, and let 
$f \star_\l g$  be a geometric combination of $(f, g)$ in proportion $\lambda$ according to Definition \ref{def:1conv}.
Then $f \star_\l g$ is measurable and satisfies 
\begin{equation*}%\label{f:sharp} 
\int _\R f  \star_\l g  ( x) \, dx  = \Big (\int _\R f ( x) \, dx \Big )^ { 1 - \l}  \Big (\int _\R g ( x) \, dx  \Big )^ {\l}  \,.
\end{equation*} 
\end{theorem}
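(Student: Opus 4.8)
The plan is to prove the identity by a single change of variables $x = w_\lambda(t)$ that pulls the integral back to the parameter interval $(0,1)$, and then to invoke the defining relation \eqref{f:derivata} of the inverse distribution functions. Recall from Remark \ref{r:well} that $f\star_\lambda g$ vanishes outside $\ran w_\lambda$ and, on that set, is given by \eqref{f:recast}. By Remark \ref{r:AC}, the geometric primitive $w_\lambda$ belongs to $AC_{loc}(0,1)$, is strictly increasing, and admits a continuous inverse $w_\lambda^{-1}$ defined on the interval $\ran w_\lambda$; these are precisely the features that make the change of variables legitimate.

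First I would record the measurability of $f\star_\lambda g$. Since $u$ and $v$ are monotone (hence Borel) and $w_\lambda^{-1}$ is continuous, the maps $t\mapsto u(t)$, $t\mapsto v(t)$ and $x\mapsto w_\lambda^{-1}(x)$ are Borel; moreover, because $F$ is absolutely continuous, the push-forward under $u$ (resp.\ under $v$) of the Lebesgue measure restricted to $(0,1)$ is absolutely continuous with respect to $\mathcal L^1$, so that preimages of Lebesgue-null sets are null. Consequently $t\mapsto f(u(t))$ and $t\mapsto g(v(t))$ agree $\mathcal L^1$-a.e.\ with Borel functions, and the same holds after composing with $w_\lambda^{-1}$ (using that the $AC$ map $w_\lambda$ sends measurable sets to measurable sets); thus $f\star_\lambda g$ is measurable.

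The heart of the argument is the change of variables. I would fix an exhausting sequence of compact intervals $[a_k,b_k]\uparrow(0,1)$. On each $[a_k,b_k]$ the function $w_\lambda$ is absolutely continuous and increasing, so the one-dimensional change of variables formula for monotone $AC$ maps, combined with \eqref{f:recast} and $w_\lambda^{-1}(w_\lambda(t)) = t$, yields
\[
\int_{w_\lambda(a_k)}^{w_\lambda(b_k)} f\star_\lambda g (x)\, dx
= \int_{a_k}^{b_k} f(u(t))^{1-\lambda}\, g(v(t))^{\lambda}\, w_\lambda'(t)\, dt\,.
\]
Letting $k\to\infty$ and using monotone convergence (all integrands are nonnegative), the left-hand side tends to $\int_\R f\star_\lambda g$ and the right-hand side to the integral over $(0,1)$. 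Since $w_\lambda'(t) = u'(t)^{1-\lambda}\, v'(t)^{\lambda}$ for $\mathcal L^1$-a.e.\ $t$ and every factor is nonnegative, I may regroup
\[
f(u(t))^{1-\lambda} g(v(t))^{\lambda}\, w_\lambda'(t)
= \big(f(u(t))\, u'(t)\big)^{1-\lambda}\, \big(g(v(t))\, v'(t)\big)^{\lambda}\,.
\]
By Lemma \ref{l:idf}(iii), i.e.\ \eqref{f:derivata} applied to $f$ and to $g$, the two quantities in parentheses equal $\int_\R f$ and $\int_\R g$ for $\mathcal L^1$-a.e.\ $t\in(0,1)$; hence the integrand is the constant $(\int_\R f)^{1-\lambda}(\int_\R g)^{\lambda}$, and integrating over the unit interval gives the claim. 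The endpoint cases $\lambda\in\{0,1\}$ require no separate treatment, the convention $a^0=1$ making the computation uniform.

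The step I expect to demand the most care is the rigorous justification of the change of variables: $w_\lambda$ is only \emph{locally} absolutely continuous and its range may be an unbounded interval, so the global identity is obtained only through the exhaustion-plus-monotone-convergence passage above, rather than by a direct application on the whole of $(0,1)$. A secondary technical point, already addressed, is the measurability of $f\star_\lambda g$, which hinges on the absolute continuity of $F$ (and of the analogous distribution function of $g$) to ensure that composition with the quantile functions $u$, $v$ preserves measurability.
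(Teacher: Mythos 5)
Your proposal is correct and follows essentially the same route as the paper's proof: measurability via the Lusin-type property that the absolutely continuous maps $F$ and $w_\lambda$ carry null sets to null sets, then the change of variables $x = w_\lambda(t)$, the regrouping $\big(f(u)u'\big)^{1-\lambda}\big(g(v)v'\big)^{\lambda}$, and the identity \eqref{f:derivata}. The only difference is cosmetic: where the paper invokes a cited change-of-variables theorem directly on $(0,1)$, you justify the same step by exhausting with compact intervals and monotone convergence, which is a perfectly sound self-contained substitute.
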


\begin{proof}  
In view of \eqref{f:recast}, to show that  $ f \star_\l g$ is measurable, since both $f$ and $g$ are measurable, it is enough to show the following Lusin property (see e.g.~\cite{GSS12}): if $N$ is a set of measure zero, each of the two sets 
$(u \circ w _\l  ^{-1}) ^ { -1} (N)$ and $(v \circ w _\l^{-1}) ^ { -1} (N)$ has measure zero. Focusing for instance on the first one, we have 
$$(u \circ w_\l ^{-1}) ^ { -1}(N)  = w_\l \circ u ^ { -1}(N) =w_\l \circ F(N)\,$$
with $F$ as in \eqref{f:F}. Since $w _\l\in AC _{loc} (0, 1) $ and $F \in AC ( \R)$, $w _\l \circ F (N)$ has measure zero.  

We can now use the change of variable $x = w  _\l ( t)$ %\cite[Corollary 6]{Serrin69} 
(see \cite[Theorem~13.32]{Yeh})
to write
\[
\begin{array}{ll}\displaystyle  \int _\R  f \star_\l g  ( x) \, dx    &  \displaystyle =
\int _ {\text{ran}\,  w_\l }f \star_\l g  ( x) \, dx    =
  \int _0 ^ 1 f \star_\l g ( w _\l ( t) ) w _\l' ( t) \, dt  \\ 
  \noalign{\bigskip}
& \displaystyle  =  \int _0 ^ 1 
 f ( u (t))^ { 1- \l}   g ( v (t)) ^ { \l}   u' ( t)^  { 1- \l}  v' ( t) ^ \l \, dt \,.
 \end{array}
\]
Finally, using \eqref{f:derivata}, we obtain 
\[
\int _0 ^ 1 
 f ( u (t))^ { 1- \l}   g ( v (t)) ^ { \l}   u' ( t)^  { 1- \l}  v' ( t) ^ \l \, dt 
 =  \Big (\int _\R f ( x) \, dx \Big )^ { 1 - \l}  \Big (\int _\R g ( x) \, dx  \Big )^ {\l}  \,.
\qedhere
\]
\end{proof}

\begin{remark}
An immediate consequence of Theorem \ref{t:1D} is the following Pr\'ekopa-Leindler type result: 
under the assumptions of Theorem \ref{t:1D} on $f$ and $g$, if $h$ is any integrable function  such that
\begin{equation}\label{f:hyp1D}
h ( x) \geq    f \star_\l g  (x)   \qquad \text{ for } \text{a.e. } x \in \R \,, 
\end{equation}
we have
 \begin{equation}\label{f:thesis1D}
 \int _\R h ( x) \, dx  \geq \Big (\int _\R f ( x) \, dx \Big )^ { 1 - \l}  \Big (\int _\R g ( x) \, dx  \Big )^ {\l}  \,,
 \end{equation}
and equality holds in \eqref{f:thesis1D} if and only if equality holds in \eqref{f:hyp1D}. 
\end{remark}

\begin{remark}\label{r:ac}  
Notice that, in general,   
$f\star_0 g$ and $f\star_1 g$ do not coincide necessarily with (a translation of) $f$ and $g$ respectively. 
Indeed, for $\l = 0$ and $\l = 1$, a geometric primitive $w_\lambda$ of their i.d.~functions $(u, v)$  does not necessarily coincide, up to a translation,  with $u$ and $v$ respectively.  For instance, referring to the examples given in Remark \ref{r:Cantor}: 
if $f$ is given by \eqref{f:f1} (and $g$ is arbitrary),
we have $ w _0( t) = 2( t - \frac{1}{2})$, and hence $f \star _0  g=  (1/2) \chi _{ [0,2]}$; 
if $f$ is given by \eqref{f:f2} (and $g$ is arbitrary),  we have $w_ 0 ( t) = t$ and hence $f \star _0 g = (\int _\R f)\chi _{[0,1]}$.  
\end{remark}

When the i.d.~functions of $f, g$ enjoy suitable regularity assumptions (which as we shall see occurs for the characteristic functions of convex bodies, see Remark \ref{r:interpolation}), the geometric combination $f \star _ \l g$ provides a continuous interpolation between $f$ and $g$.

\begin{proposition}\label{p:contl} Let $f$ and $g$ satisfy the same assumptions  of Theorem \ref{t:1D}. If in addition their 
i.d.~functions belong to
$AC_{\text{loc}}(0,1)$, we  have that:
\begin{itemize}
\item[(i)]  up to translations, $f\star_0 g = f$ and $f\star_1 g= g$;
\medskip
\item[(ii)]  the map
$[0,1]\ni\lambda\mapsto f\star_\l g$ is continuous in
$L^1(\R)$.
\end{itemize} 
\end{proposition}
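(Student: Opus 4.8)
\medskip
\noindent\emph{Proof plan.}

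\emph{Part (i).} The plan is to observe that, under the extra hypothesis, the geometric primitive at $\l=0$ reduces to a translate of $u$. Indeed, from the definition of $w_\l$ with $\l=0$ we have $w_0(t)=\int_{1/2}^t u'(s)\,ds+c$, and since $u\in AC_{loc}(0,1)$ it holds $u(t)=u(1/2)+\int_{1/2}^t u'$; hence $w_0=u+a$ with $a:=c-u(1/2)$. Because $u$ is continuous and strictly increasing (Lemma~\ref{l:idf}(i)) it has a continuous inverse on $\ran u$, so that $w_0^{-1}(x)=u^{-1}(x-a)$ and, by \eqref{f:recast},
\[
f\star_0 g(x)=f\big(u(w_0^{-1}(x))\big)=f\big(u(u^{-1}(x-a))\big)=f(x-a)\qquad (x\in\ran w_0=\ran u+a).
\]
Off $\ran w_0$ both members vanish a.e., since $f=0$ a.e.\ outside the interval $\ran u$ (where $F$ is constant). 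Thus $f\star_0 g=f(\cdot-a)$, and symmetrically $f\star_1 g=g(\cdot-b)$.

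\emph{Part (ii).} I would work with the standard combination (so that $w_\l(1/2)=0$ fixes the translation ambiguity), and prove sequential continuity: fix $\l_0$ and $\l_n\to\l_0$, and set $P_\l:=f\star_\l g$, $\phi_\l(t):=f(u(t))^{1-\l}g(v(t))^{\l}$ and $M_\l:=\big(\int_\R f\big)^{1-\l}\big(\int_\R g\big)^{\l}$. Two preliminary facts are needed. First, \eqref{f:derivata} yields $\phi_\l(t)\,w_\l'(t)=M_\l$ a.e., whence the \emph{uniform mass identity}
\[
\int_{w_\l(S)}P_\l\,dx=\int_S\phi_\l\,w_\l'\,dt=M_\l\,|S|\qquad\text{for every measurable }S\subseteq(0,1).
\]
Second, $w_{\l_n}'=u'^{\,1-\l_n}v'^{\,\l_n}\to w_{\l_0}'$ a.e.\ and, by the weighted arithmetic--geometric mean inequality, $w_{\l_n}'\le u'+v'\in L^1_{loc}$; dominated convergence then gives $w_{\l_n}'\to w_{\l_0}'$ in $L^1_{loc}(0,1)$, and therefore $w_{\l_n}\to w_{\l_0}$ uniformly on compact subsets of $(0,1)$.

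The core is a cut-off argument. Given $\e>0$, Lusin's theorem together with $u',v'\in(0,\infty)$ a.e.\ provides a compact set $G\subset(0,1)$ with $|(0,1)\setminus G|<\e$, on which $u',v'$ range in a compact subset of $(0,\infty)$ and $\phi_{\l_0}$ is continuous. By the mass identity the parts of $P_{\l_n}$ and $P_{\l_0}$ supported over $(0,1)\setminus G$ carry $L^1$-mass $\le(M_{\l_n}+M_{\l_0})\e$, which is small uniformly in $n$; so it remains to estimate $\big\|P_{\l_n}\chi_{w_{\l_n}(G)}-P_{\l_0}\chi_{w_{\l_0}(G)}\big\|_{L^1}$. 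I would split it through $\theta_n(x):=\phi_{\l_0}\big(w_{\l_n}^{-1}(x)\big)\chi_{w_{\l_n}(G)}(x)$: the \emph{value} part $\big\|P_{\l_n}\chi_{w_{\l_n}(G)}-\theta_n\big\|_{L^1}=\int_G|\phi_{\l_n}-\phi_{\l_0}|\,w_{\l_n}'\,dt\to0$ by dominated convergence (on $G$ the integrands are uniformly bounded and $\phi_{\l_n}\to\phi_{\l_0}$ a.e.), while for the \emph{position} part the layer--cake formula gives
\[
\big\|\theta_n-P_{\l_0}\chi_{w_{\l_0}(G)}\big\|_{L^1}=\int_0^\infty\big|\,w_{\l_n}(A_s)\,\triangle\,w_{\l_0}(A_s)\,\big|\,ds,\qquad A_s:=\{t\in G:\phi_{\l_0}(t)>s\};
\]
for a.e.\ $s$ the set $A_s$ admits a compact representative (continuity of $\phi_{\l_0}$ on $G$), so the symmetric difference of its images under the uniformly convergent homeomorphisms tends to $0$, and a final dominated convergence in $s$ closes the estimate. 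Letting $\e\to0$ yields (ii).

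\emph{Main obstacle.} The delicate point is that $f\circ u=\big(\int_\R f\big)/u'$ may oscillate arbitrarily even when $u\in AC_{loc}$, and consequently $P_{\l_n}\to P_{\l_0}$ can \emph{fail} pointwise a.e.; the naive scheme of a.e.\ convergence plus Scheff\'e's lemma is thus unavailable. The role of the argument above is precisely to isolate this pathology: the uniform mass identity neutralizes the (possibly unbounded) region where $u',v'$ degenerate, uniformly in $\l$, while the compactness supplied by Lusin's theorem, combined with the uniform convergence $w_{\l_n}\to w_{\l_0}$, keeps the transported superlevel sets under control, thereby reducing everything to bounded data on which dominated convergence is legitimate.
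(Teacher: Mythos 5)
Your proposal is correct, but it takes a genuinely different route from the paper's. Part (i) is essentially the same in both (the paper just notes $w_0=u$ and $w_1=v$; your bookkeeping of the constant $a=c-u(1/2)$ and of the null set off $\ran u$ is a careful version of the same observation). For part (ii), the paper argues via \emph{pointwise} convergence: it shows $w_{\lambda_j}^{-1}(x)\to w_{\overline{\lambda}}^{-1}(x)$ for every $x\in I=\ran w_{\overline{\lambda}}$ (with a Fatou argument excluding the limit values $\tau=0,1$), deduces a.e.\ convergence of $f\star_{\lambda_j}g$ on $I$, and then upgrades to $L^1$ using the convergence of the integrals given by Theorem \ref{t:1D} together with the generalized Scheff\'e--Riesz theorem \cite[Theorem~16.28]{Yeh}, handling the mass on $\R\setminus I$ by another Fatou argument. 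You avoid pointwise convergence altogether, replacing it by the transported-mass identity $\int_{w_\l(S)}f\star_\l g=M_\l|S|$ (an immediate consequence of \eqref{f:derivata}), Lusin compactness, the locally uniform convergence $w_{\lambda_n}\to w_{\lambda_0}$, and a layer-cake estimate of the transported superlevel sets. The paper's route is shorter; yours is more robust, and your ``main obstacle'' remark in fact touches the one delicate point in the paper's proof of claim (a): the inference from ``$\psi_j\to\psi$ a.e.'' plus ``$w_{\lambda_j}^{-1}(x)\to w_{\overline{\lambda}}^{-1}(x)$ for all $x\in I$'' to ``$\psi_j\circ w_{\lambda_j}^{-1}\to\psi\circ w_{\overline{\lambda}}^{-1}$ a.e.'' is not valid for merely measurable $\psi_j$ (composition with moving base points behaves like a translation, and $\chi_E(\cdot+\e_j)\to\chi_E$ a.e.\ can fail for suitable $E$ and $\e_j\to 0$); your Lusin/layer-cake machinery is exactly the kind of argument that repairs this, at the price of convergence in measure rather than a.e.

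One caveat on your position part: the assertion that ``the symmetric difference of the images under the uniformly convergent homeomorphisms tends to $0$'' does not follow from uniform convergence alone --- increasing homeomorphisms uniformly close to the identity can crush a fat Cantor set to arbitrarily small measure. To justify $|w_{\lambda_n}(A_s)\,\triangle\, w_{\lambda_0}(A_s)|\to 0$ you must also invoke the domination $w_{\lambda_n}'\le u'+v'\in L^1_{\text{loc}}(0,1)$, which gives uniform integrability and hence $|w_{\lambda_n}(A)|=\int_A w_{\lambda_n}'$ small uniformly in $n$ whenever $|A|$ is small (to bound the images of thin open neighborhoods of the compact representative of $A_s$), together with $w_{\lambda_0}'>0$ a.e.\ (to guarantee that the image of such a neighborhood under $w_{\lambda_n}$ eventually contains $w_{\lambda_0}(A_s)$, and vice versa). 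Both ingredients are already among your preliminary facts, so this is a matter of spelling out the step rather than a gap in the scheme.
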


\begin{proof}  Let   $u, v$ be the i.d.~functions of $f,g$ respectively. By assumption $u, v\in AC_{\text{loc}}(0,1)$, and thus 
statement (i) immediately follows from the equalities $w_0 = u$ and $w _ 1 = v$. 

Let us prove statement (ii). Let $(\lambda_j) \subset [0,1]$ be a sequence converging
to $\overline{\lambda}\in [0,1]$, and let us define
\[
\psi_j(t) := f(u(t))^{1-\lambda_j} g(v(t))^{\lambda_j}\,,
\quad
\psi(t) := f(u(t))^{1-\overline{\lambda}} g(v(t))^{\overline{\lambda}}\,, 
\qquad t\in (0,1),
\]
$f\star_{\lambda_j} g = \psi_j \circ w_{\lambda_j}^{-1}$
on $I_j:=\ran w_{\lambda_j}$,
$f\star_{\overline{\lambda}} g = \psi \circ w_{\overline{\lambda}}^{-1}$
on $I := \ran w_{\overline{\lambda}}$,
all functions vanishing otherwise.

We claim that:
\begin{itemize}
\item[(a)]
$\lim_j f\star_{\lambda_j} g (x) = f\star_{\lambda} g (x)$,
for a.e.\ $x\in I$. 

\item[(b)] 
$\displaystyle \lim_{j\to+\infty}\int_{I} f\star_{\lambda_j} g =
\int_{I} f\star_{\overline{\lambda}} g,
\qquad
\lim_{j\to+\infty}\int_{\R\setminus I} f\star_{\lambda_j} g =0.
$
\end{itemize}
Before proving this claim, let us show how the convergence
of $f\star_{\lambda_j} g$ to
$f\star_{\overline{\lambda}} g$ in $\L^1(\R)$ follows
from (a) and (b).

Specifically, from (a), the first equality in (b) 
and \cite[Theorem~16.28]{Yeh}, it follows that
$f\star_{\lambda_j} g$ converges to
$f\star_{\overline{\lambda}} g$ in $L^1(I)$.
On the other hand, from the second equality in (b) it follows that
$f\star_{\lambda_j} g$ converges to
$0 = f\star_{\overline{\lambda}} g$ in $L^1(\R\setminus I)$.

\noindent
\textit{Proof of (a).}
As a first step, let us show that if $x\in I$, then $x\in I_j$ for $j$
large enough.
Specifically, it is enough to observe that, for every $\lambda\in [0,1]$,
\[
\ran w_\lambda = (-a_\lambda, b_\lambda),
\quad\text{with}\quad
a_\lambda := \int_{0}^{1/2} (u')^{1-\lambda} (v')^{\lambda},
\quad
b_\lambda := \int_{1/2}^1 (u')^{1-\lambda} (v')^{\lambda},
\]
and that, by Fatou's lemma,
\[
a_{\overline{\lambda}}\leq \liminf_{j\to+\infty} a_{\lambda_j}\,,
\qquad
b_{\overline{\lambda}} \leq \liminf_{j\to+\infty} b_{\lambda_j}\,.
\]

Since, for a.e.\ $t\in (0,1)$ we have that 
$\lim_j \psi_j(t) = \psi(t)$,
(a) will follow if we show that, for every $x\in I$,
$\lim_j w_{\lambda_j}^{-1}(x) = w_{\overline{\lambda}}^{-1}(x)$.
Given $x\in I$, for $j$ large enough let 
$t_j := w_{\lambda_j}^{-1}(x)$.
Let $(t_{j_k})$ be a subsequence converging to some~$\tau \in [0, 1]$.

If $\tau \in (0,1)$, 
it holds that
\[
x = w_{\lambda_j}(t_{j_k})
= \int_{1/2}^{t_{j_k}} (u')^{1-\lambda_{j_k}} (v')^{\lambda_{j_k}}
\longrightarrow
\int_{1/2}^{\tau} (u')^{1-\overline{\lambda}} (v')^{\overline{\lambda}}
= w_{\overline{\lambda}}(\tau),
\] 
where the convergence holds since 
$0 \leq  (u')^{1-\lambda_{j_k}} (v')^{\lambda_{j_k}} \leq u' + v'
\in L^1_{\text{loc}}(0,1)$.
Hence, $x = w_{\overline{\lambda}}(\tau)$, i.e.,
$\tau = w_{\overline{\lambda}}^{-1}(x)$.

Assume now that $\tau = 1$.
The sequence 
$q_k := (u')^{1-\lambda_{j_k}} (v')^{\lambda_{j_k}}\, \chi_{[1/2, t_{j_k}]}$
converges a.e.\ in $[1/2, 1]$ to
$q :=  (u')^{1-\overline{\lambda}} (v')^{\overline{\lambda}}$,
hence,
by Fatou's Lemma we have that
\begin{equation*}
b_{\overline{\lambda}} =
\int_{1/2}^1 q \leq \liminf_{k\to +\infty} \int_{1/2}^1 q_k = x\,,
\end{equation*}
in contradiction with
the assumption that $x$ belongs to the open interval 
$I 
= (-a_{\overline{\lambda}}, b_{\overline{\lambda}})$.

A similar argument shows that $\tau \neq 0$.

We have thus shown that every converging subsequence of $w_{\lambda_j}^{-1}(x)$
converges to 
$w_{\overline{\lambda}}^{-1}(x)$, hence the
whole sequence converges to $w_{\overline{\lambda}}^{-1}(x)$.

\noindent
\textit{Proof of (b).}
Since $\int_{\R} f$ and $\int_{\R} g$ are strictly positive,
by Theorem~\ref{t:1D}
it holds that
\begin{equation}\label{f:equalint}
\int_{\R} f\star_{\lambda_j} g =
\left(\int_{\R} f\right)^{1-\lambda_j}
\left(\int_{\R} g\right)^{\lambda_j}
\stackrel{j\to+\infty}{\longrightarrow}
\left(\int_{\R} f\right)^{1-\overline{\lambda}}
\left(\int_{\R} g\right)^{\overline{\lambda}}
= \int_{\R} f\star_{\overline{\lambda}} g\,.
\end{equation}
Hence, it is enough to prove only the second equality in (b).
From \eqref{f:equalint}, (a) and Fatou's Lemma we have that
\[
0\leq \limsup_{j\to +\infty} \int_{\R\setminus I} f\star_{\lambda_j} g
=
\limsup_{j\to +\infty} \left(
\int_{\R} f\star_{\lambda_j} g
- 
\int_{I} f\star_{\lambda_j} g\right)
\leq 
\int_{\R} f\star_{\overline{\lambda}} g
-
\int_{I} f\star_{\overline{\lambda}} g = 0,
\]
so that the second equality in (b) follows.
\end{proof}

\subsection{The $n$-dimensional case}

Let $(z_1 , \dots,  z_n)$  be a family of linearly independent vectors in $\R^n$ 
(a typical choice will be the canonical basis $(e_1, \dots, e _n)$ of $\R ^n$, cf.~Section \ref{sec:geometric}).  
For simplicity of notation,  we do not indicate the dependence of our construction on the family $(z_1 , \dots,  z_n)$, as it  will remain fixed throughout this section.

\begin{definition}\label{d:field}
Given a nonnegative integrable function $f$ on $\R^n$ with strictly positive integral, 
we call 
$(z_1, \dots, z _n)$-{\it inverse distribution field (shortly i.d.~field) of $f$}  the vector field 
$$ U ( t)= u_1 ( t_1) z_1+  u _ 2 ( t_1, t_2) z_ 2 + u _ 3 (t_1, t_2, t_3) z_3 + \dots + u _n ( t_1, t_2, \dots, t _n)z _n \,,$$  
 defined for $\mathcal L ^ n$-a.e.\ $t = (t_1, \dots, t_n) \in (0,1)^n$  as follows:
 \begin{itemize}
\item $t_1 \mapsto u _ 1(t_1)$ is the i.d.~function of the map
$$x _ 1 \mapsto  \int _{ \R ^ { n-1}}  f  ( x_1z _ 1 +  x_ 2 z _ 2 + \dots + x _n z _n)  \, dx_2  \dots  d x _n \,;$$ 
\item  for $\mathcal L ^ 1$-a.e.\ $t _1\in (0, 1)$, $t_2 \mapsto u _ 2 ( t_1 ,t_2)$ is the i.d.~function of the map 
$$x _ 2 \mapsto \int _{ \R ^ { n-2} } f (u _1 ( t_1) z _ 1 + x_2 z _ 2 + \dots + x _n z _n)  \, d x _ 3  \dots \, d x _n\,;$$  
\item $\dots$
\item for $\mathcal L ^ { n-1}$-a.e.\ $(t _1, \dots , t _ { n-1})\in (0, 1) ^ { n-1}$, $t _n \mapsto u _n ( t_1, \dots, t _n)$ is the i.d.~function of the map  
$$x _ n \mapsto  f (u _1 ( t_1) z _ 1 + u _ 2 ( t_1, t_2) z _ 2 + \dots + u _ {n-1} ( t _1, t_2 , \dots, t _ {n-1} ) z _ {n-1}  + x _n z _n)\,.$$  
\end{itemize} 
\end{definition}

\begin{remark} \label{r:produttoria} The above definition is well-posed and leads to a $n$-dimensional analogue of the identity  \eqref{f:derivata}. Indeed: %$${ \rm det} JU ( t) =  \prod _{i= 1} ^n \frac{\partial u _ i} {\partial t _i} \,. $$
\begin{itemize}
\item   the i.d.~function $t _ 1 \mapsto u _ 1 ( t_1)$ is well-defined  (because by assumption $f$ has strictly positive integral), and for  $\mathcal L ^1$-a.e.\ $t _ 1 \in (0,1)$ 
it satisfies    the equality  
\[
\begin{split}
& u' _ 1 ( t_1) \int _{ \R ^ { n-1}}  f  ( u_1 (t_1) z _ 1 +  x_ 2 z _ 2 + \dots + x _n z _n) \, dx_2  \dots  d x _n  
\\ & = \int_{\R^n} f(x_1 z_1 + \cdots + x_n z_n)\, dx_1\ldots dx_n  
\quad \Big ( = \frac{1}{|z _ 1\wedge \dots \wedge z _n |} \int _ { \R ^ n }  f \Big )  \,;
\end{split}
\]

\item for  $\mathcal L ^1$-a.e.\ $t _ 1 \in (0,1)$,  the i.d.~function $t _ 2 \mapsto u _ 1 ( t_1, t_2)$ is well-defined (because by the previous item the integral at the r.h.s.\ of the equality below is strictly positive),  and  for $\mathcal L ^1$-a.e. $t _ 2 \in (0,1)$  it satisfies
\[
\begin{split}
& \frac{\partial u _ 2} {\partial t _ 2} ( t_1, t _ 2) \int _{ \R ^ { n-2} } f (u _1 ( t_1) z _ 1 + u_2 ( t_1, t_2) z _ 2 + \dots + x _n z _n) \,  d x _ 3  \dots \, d x _n  
\\ & 
=  \int _{ \R ^ { n-1}}  f  ( u_1 ( t_1) z _ 1 +  x_ 2 z _ 2 + \dots + x _n z _n) \, dx_2  \dots  d x _n
\end{split}
\]

\item $\dots$

\item   for  $\mathcal L ^{n-1}$-a.e.\ $( t_1, \dots , t _ {n-1}) \in (0,1) ^ { n-1}$,  the i.d.~function $t _ n \mapsto u _ n ( t_1, \dots,  t_{n-1}, t _n)$ is well-defined (because by the previous items the integral at the r.h.s. of the equality below is strictly positive),  and for $\mathcal L ^1$-a.e. $t _ n \in (0,1)$  it satisfies:
\[
\begin{split}
& \frac{\partial u _ n}  {\partial t _ n} ( t_1, t _ 2, \dots, t _ n ) f ( u _ 1 ( t _ 1) z _ 1 + u _ 2 ( t_1, t _2) z _ 2 + \dots + u _ n ( t _1, t_2,  \dots, t _n) z _ n)   
\\ & 
= \int  _ \R f ( u _ 1 ( t _ 1) z _ 1 + u _ 2 ( t_1, t _2) z _ 2 +  \dots + x _n z _n ) \, d x _n \,. 
\end{split}
\]
\end{itemize}

Multiplying side by side the above equalities, we get by analogy with \eqref{f:derivata} the identity 
\begin{equation}\label{f:product}
f ( U ( t))\, \prod _{i= 1} ^n \frac{\partial u _ i} {\partial t _i} = \frac{1}{|z _ 1\wedge \dots \wedge z _n |}\int _ { \R ^ n} f    
\qquad \text {for $\mathcal L ^n$-a.e.\ } t \in ( 0,1) ^ n \,.
\end{equation}  
\end{remark}

\begin{definition}  
Let $U$ and $V$ be the $(z_1 , \dots,  z_n)$-i.d.~fields of two nonnegative integrable function $f$ and $g$
with strictly positive integrals on $\R^n$.
We call $(z_1, \dots,  z_n)$-{\it geometric potential of $(U, V)$ in proportion $\lambda$} any vector field 
$$W _\l (t)=  w_{\l, 1} ( t_1) z _ 1+ w _{\l, 2} ( t_1, t_2) z _ 2 + \dots + w _{\l,n } ( t_1, \dots t _n ) z _n$$ such that: 
 \begin{itemize}
\item $t _1 \mapsto w _ {\l,1} (t_1)$  is a geometric primitive in proportion $\l$ of $t _ 1 \mapsto u _ 1 ( t_1)$, $t _ 1 \mapsto v_ 1 ( t_1)$;
\smallskip
\item  for $\mathcal L ^ 1$-a.e.\ $t_1 \in (0, 1)$,  
$t_2 \mapsto w_ {\l,2} ( t_1, t_2)$  is a geometric primitive in proportion $\l$ of $t _ 2 \mapsto u _2 ( t_1, t_2)$, $t _ 2 \mapsto v_ 2 ( t_1, t_2)$;
\smallskip
\item $\dots$
\smallskip
\item  for $\mathcal L ^ {n-1}$-a.e.\ $(t_1, \dots, t _ {n-1})  \in (0, 1)^ { n-1}$,  
$t_n \mapsto w_ {\l,n} ( t_1,  \dots, t _ n)$  is a geometric primitive in proportion $\l$ of $t _ n \mapsto u_n ( t_1,  \dots, t _ n) $, $t _ n \mapsto v_n ( t_1,  \dots, t _ n)$.
\end{itemize} 
In case all the geometric primitives $w _{\l, i}$ are the standard ones, 
 namely when
 \begin{equation*}%\label{f:choicew}
w_1 ( 1/2) = 0\, , \quad w _ 2 ( t _1, 1/2) = 0\,, \quad \dots ,\quad  w _n ( t_1, \dots, t _ {n-1}, 1/2) = 0\,,
\end{equation*} 
we shall refer to the geometric potential $W _\l$ as to the {\it standard} one.
\end{definition}

\begin{remark}\label{r:acloc} 
By analogy with one-dimensional case, we have that
\begin{itemize} 
\item[(i)] 
for $\mathcal L ^ { i-1}$-a.e.\ $(t_1, \dots, t _{i-1})  \in ( 0,1) ^ { i-1}$, the maps $t_i \mapsto w_{\l, i} ( t _1, \dots, t_{i-1}, t_i)$ are in  $ AC_{loc}(0,1)$,  with 
\begin{equation}\label {f:jacobians}   \frac{\partial w _{\l, i}} {\partial t _i}  =   \Big ( \frac{\partial u _ i} {\partial t _i}  \Big ) ^ { 1- \l}   \Big ( \frac{\partial v _ i} {\partial t _i}  \Big )  ^ { \l} >0 
 \qquad 
\text {for $\mathcal L ^1$-a.e.\ } t_i \in ( 0,1) \,;
\end{equation} 
\item[(ii)]
$W _\l$ admits a classical inverse defined on $\ran W _\l := \{ W _ \l ( t) \ :\ t \in (0, 1) \}$. 
\end{itemize}  
\end{remark}

\begin{definition}\label{def:nconv}
Let $f,g: \R^n \to \R$ be nonnegative, integrable functions having strictly positive integrals, with 
$(z_1,\dots, z _ n)$-i.d.~fields 
 $U,V$ respectively. 
 If $W _\l$ is a $(z_1,\dots, z _ n)$-geometric potential  of $(U, V)$ in proportion $\lambda$, we call the function 
\begin{equation}\label{f:nconv}
f \star_\l g  (x) := 
\begin{cases}
f ( U (t)) ^ { 1- \l} g ( V (t)) ^ { \l}  & \text{ if }  x = W _\l (t) \, , \  t \in (0, 1) ^ n \,,
\\ 
0 & \text{ otherwise,}
\end{cases} 
\end{equation}
\textit{a $(z_1, \dots, z_n)$-geometric combination of $(f, g)$ in proportion $\lambda$}. 
 
 In case the geometric potential $W _\l$ in \eqref{f:nconv} is chosen as the standard one, we shall refer to 
$f \star_\l g$  as to the {\it standard} geometric combination of $(f, g)$ in proportion $\lambda$.

 \end{definition}

\begin{remark} Again by analogy with the one-dimensional case,  denoting by $\ran W _\l$ the image of $W _\l$, we have 
\begin{equation}\label{f:recastndim} 
f \star_\l g  (x) = 
\begin{cases}
f ( U (W _\l^ { -1} (x))) ^ { 1- \l} g ( V (W_\l^ { -1} (x))) ^ { \l}
&\text{if}\ x\in \ran W _\l,
\\
0 & \text{otherwise.}
\end{cases}
\end{equation}
Notice that, in the present $n$-dimensional setting, 
the function $W_\l$ is identified
up to an additive constant in the first component, up to a function of $x_1$ in the second component, up to a function of $x_2$ in the third one, and so on, up to a function of $(x_1, \dots, x_{n-1})$ in the last component.
\end{remark}

\begin{theorem} \label{t:nD} 
Let $f,g: \R^n \to \R$ be nonnegative, integrable functions having strictly positive integrals. Let $\l \in [0, 1]$, and let 
$f \star_\l g$  be a $(z_1, \dots, z_n)$-geometric combination of $(f, g)$ in proportion $\lambda$ according to Definition \ref{def:nconv}.
Then $f \star_\l g$ is measurable and satisfies 
\begin{equation*}%\label{f:sharpnD} 
\int _{\R ^n} f  \star_\l g  ( x) \, dx  = \Big (\int _{\R^n} f ( x) \, dx \Big )^ { 1 - \l}  \Big (\int _{\R ^n} g ( x) \, dx  \Big )^ {\l}  \,.
\end{equation*} 
\end{theorem}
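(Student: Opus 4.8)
The plan is to reproduce, in the $n$-dimensional setting, the two-step scheme used for Theorem~\ref{t:1D}: first establish the measurability of $f\star_\l g$, and then evaluate its integral through the change of variables $x=W_\l(t)$, exploiting the triangular (Knothe-type) structure of $W_\l$ together with the pointwise identities \eqref{f:product} and \eqref{f:jacobians}.

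For measurability I would start from the recast expression \eqref{f:recastndim} and, exactly as in dimension one, reduce the claim to a Lusin-type property: it suffices to show that if $N\subset\R^n$ is Lebesgue-null then the sets $(U\circ W_\l^{-1})^{-1}(N)$ and $(V\circ W_\l^{-1})^{-1}(N)$ are null. Writing everything in the coordinates $x=\sum_i y_i z_i$ associated with the fixed basis, both $W_\l$ and the i.d.~field $U$ are triangular maps whose $i$-th component depends only on the first $i$ variables and lies in $AC_{loc}$ with respect to the $i$-th variable (Remark~\ref{r:produttoria} and Remark~\ref{r:acloc}). The null-set-preservation then follows by peeling off one variable at a time and invoking, in each slice, the one-dimensional absolute continuity already used in the proof of Theorem~\ref{t:1D}, combined with Fubini's theorem to assemble the slices.

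For the value of the integral I would pass to the coordinates $y=(y_1,\dots,y_n)$ defined by $x=\sum_i y_i z_i$, so that $dx=|z_1\wedge\dots\wedge z_n|\,dy$, and then perform the change of variables $y=W_\l(t)$. Because $W_\l$ is triangular, its Jacobian matrix in these coordinates is lower triangular with diagonal entries $\partial w_{\l,i}/\partial t_i$, hence the Jacobian determinant equals $\prod_{i}\partial w_{\l,i}/\partial t_i$. Using \eqref{f:recastndim} on $\ran W_\l$ and then \eqref{f:jacobians}, the integral becomes
\[
|z_1\wedge\dots\wedge z_n|\int_{(0,1)^n} f(U(t))^{1-\l}\,g(V(t))^{\l}\,
\Big(\prod_i \tfrac{\partial u_i}{\partial t_i}\Big)^{1-\l}
\Big(\prod_i \tfrac{\partial v_i}{\partial t_i}\Big)^{\l}\,dt .
\]
Grouping the $(1-\l)$- and $\l$-powers and inserting the $n$-dimensional identity \eqref{f:product} separately for the pair $(f,U)$ and for the pair $(g,V)$, the integrand collapses to the constant $|z_1\wedge\dots\wedge z_n|^{-1}\big(\int_{\R^n} f\big)^{1-\l}\big(\int_{\R^n} g\big)^{\l}$; multiplying by the volume factor $|z_1\wedge\dots\wedge z_n|$ and integrating over the unit cube (of measure one) yields exactly $\big(\int_{\R^n} f\big)^{1-\l}\big(\int_{\R^n} g\big)^{\l}$.

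The main obstacle is the rigorous justification of this multidimensional change of variables, since $W_\l$ is only a Knothe-type map, $AC_{loc}$ in each single variable but with merely measurable dependence on the preceding ones, so that the classical $C^1$ area formula does not apply directly. I expect the cleanest remedy to be to avoid a global substitution altogether and instead integrate one variable at a time, from the innermost index $t_n$ to the outermost $t_1$: at each stage one applies the one-dimensional change of variables of \cite[Theorem~13.32]{Yeh} in the single active slice, precisely as in the proof of Theorem~\ref{t:1D}, together with the corresponding scalar identity from Remark~\ref{r:produttoria}, and then uses Fubini's theorem to descend to the next variable. The a.e.\ well-posedness statements collected in Remark~\ref{r:produttoria} and the strict positivity of the diagonal Jacobians in \eqref{f:jacobians} are what guarantee that each of these one-dimensional steps is legitimate for a.e.\ choice of the frozen variables.
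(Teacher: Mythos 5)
Your proposal is correct and follows essentially the same route as the paper: the same reduction of measurability to a Lusin-type null-set property via the triangular $AC_{loc}$ structure of $W_\l$, and the same evaluation of the integral by iterated one-dimensional changes of variables (citing \cite[Theorem~13.32]{Yeh}) combined with \eqref{f:jacobians} and \eqref{f:product}, deliberately avoiding a global area-formula argument for the merely Knothe-type map $W_\l$. The only cosmetic difference is that you peel off variables from the innermost $t_n$ outward, whereas the paper substitutes $x_1=w_{\l,1}(t_1)$ first and works inward, which slightly simplifies the a.e.\ bookkeeping since the statements of Remark~\ref{r:produttoria} are phrased in the $t$-variables.
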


\begin{proof}
  To show that  $ f \star_\l g$ is measurable,  thanks to \eqref{f:recastndim}   it is enough to show that,  if $N$ is is Lebesgue negligible, each of the two sets 
$(U \circ W _\l ^{-1}) ^ { -1} (N)$ and $(V \circ W _\l ^{-1}) ^ { -1} (N)$ has measure zero. Considering for instance the set 
$(U \circ W_\l  ^{-1}) ^ { -1}(N)$, we can write it as $W_\l  \circ U ^ { -1}(N)$. Thus we see that  it is Lebesgue negligible because $U$ is the inverse distribution
 field of the integrable function $f$, and  the components of $W _\l$ have the property that the maps $t _ i \mapsto w_{\lambda , i}$ are locally absolutely continuous
(see Remark~\ref{r:acloc}).

Let us compute the integral of $ f \star_\l g$.
Let us define the function
$h(x) := f \star_\l g(x_1 z_1 + \cdots + x_n z_n)$,
$x\in\R^n$,
so that
\begin{equation}\label{f:cvar}
\int _{\R^n}   f \star_\l g  ( x) \, dx 
= 
{|z_1 \wedge \dots \wedge z_n |}
\int_{\R^n} h(x) \, dx\,.
\end{equation}
In order to compute the integral of $h$, we perform $n$ one-dimensional changes of variable.
Proceeding as in the proof of Theorem~\ref{t:1D},
and recalling that, by Definition~\ref{def:nconv}, $h$ vanishes outside
the set
$\{(w_{\l,1}(t_1), \ldots, w_{\l, n}(t_1,\ldots, t_n))\colon 
t_i \in (0,1)\}$,
we first use the change of variable $x_1 = w_{\l, 1}(t_1)$
(see \cite[Theorem~13.32]{Yeh}), obtaining
\[
\int_{\R^n} h(x) \, dx
= \int_{(0,1)} \left(\int_{\R^{n-1}}
h(w_{\l, 1}(t_1), x_2,\ldots,x_n)\, dx_2\ldots dx_n)\, w_{\l, 1}'(t_1)\right)\, dt_1\,.
\]
Next, in the inner integral we proceed with the change of variable
$x_2 = w_{\l, 2}(t_1, t_2)$, $t_2\in (0,1)$.
Finally, after the last change of variable
$x_n = w_{\l, n}(t_1,\ldots, t_n)$, $t_n\in (0,1)$,
using the family of equalities~\eqref{f:jacobians},
and recalling the Definition~\ref{def:nconv} of
$f\star_\lambda g$,
we end up with
\[
\begin{split}
\int_{\R^n} h(x) \, dx
& = 
\int _{(0, 1) ^ n }  h(w_{\l,1}(t_1),\ldots, w_{\l,n}(t_1,\ldots, t_n))\,    
\prod _{i= 1} ^n \frac{\partial w _ {\l,i}} {\partial t _i} 
  \,  dt
\\ &
= 
\int _{(0, 1) ^ n }  h(w_{\l,1}(t_1),\ldots, w_{\l,n}(t_1,\ldots, t_n))\,    
\prod _{i= 1} ^n \Big ( \frac{\partial u _ i} {\partial t _i}  \Big ) ^ { 1- \l}  \prod _{i= 1} ^n  \Big ( \frac{\partial v _ i} {\partial t _i}  \Big )  ^ { \l}
  \,  dt 
\\ &
= 
\int _{(0, 1) ^ n }  f(U(t))^{1-\l} g(V(t))^\l\,    
\prod _{i= 1} ^n \Big ( \frac{\partial u _ i} {\partial t _i}  \Big ) ^ { 1- \l}  \prod _{i= 1} ^n  \Big ( \frac{\partial v _ i} {\partial t _i}  \Big )  ^ { \l}
  \,  dt\,. 
\end{split}
\]
Finally, from \eqref{f:cvar} and in view of \eqref{f:product},
we conclude that
\[
\int _{\R^n}   f \star_\l g  ( x) \, dx 
=
\left(\int_{\R^n}f(x)\, dx\right)^{1-\l}
\left(\int_{\R^n}g(x)\, dx\right)^{\l}\,.
\qedhere
\]
\end{proof}

\section{Geometric combination of convex bodies}\label{sec:geometric}

In  this section we focus attention on the geometric combination between the characteristic functions of two nondegenerate, centrally symmetric, convex bodies 
$K$ and $L$ in $\R ^n$.

Below, we write for simplicity $K \star_\l L$ in place of $\chi _K \star_\l \chi _L$; moreover, we refer to  the i.d.~function (or field) of the characteristic function $\chi _K$  as the i.d.~function (or field) of $K$.  
Since we are dealing with nondegenerate convex bodies,
for simplicity of notation
in the proofs sometimes we identify a convex body with its interior.

\subsection{Convexity preserving for $n=1, 2$}
\label{sec:convexity}

When $K$ is a symmetric interval, $K= \big [ - a,  a  \big ]$, its i.d.~function is given by
\begin{equation}\label{f:1idf} 
u  ( t) = a \big ( 2 t - 1 \big )
\qquad \forall t \in (0, 1 )\,.
\end{equation}
It is elementary to deduce the behavior of the geometric combination for intervals: 

\begin{proposition}\label{p:intervals} 
Let  $K$ and $L$ be 
symmetric intervals, and $\lambda \in [0,1]$. 
Then the standard geometric combination $K \star _\l L$ is itself  a symmetric interval, precisely
\[
K \star _\l L = ( 1- \l)\cdot  K +_0 \l \cdot L \,.
\]
In particular, the equality $| K \star_\l L | = |K| ^ { 1 - \l} |L| ^ \l$ given by Theorem~\ref{t:1D} is equivalent to  the one-dimensional log-Brunn-Minkowski equality. 
\end{proposition}

\begin{proof} 
If $u$ and $v$ are the i.d.~functions of 
$K = \big [ - a,  a  \big ]$ and $L = \big [- b  ,  b \big ]$, by \eqref{f:1idf}
the standard geometric primitive of $(u,v)$ in proportion $\lambda$  is given by
$$w_\l ( t) = a ^ { 1- \l} b ^ \l  \big (2 t -  1 \big )  \,.$$ 
Hence, the %(interior of the) 
body $K \star _\l L$, which is the image of $w_\l$, is given by 
\[
K \star _\l L = \{ w _\l ( t) \, :\, t \in (0,1)\} =  \big  (- a ^ { 1- \l} b ^ \l ,   a ^ { 1- \l} b ^ \l \big  )\,.
\] 
On the other hand, since the support functions of $K$ and $L$ are given respectively by 
$h _ K ( \xi) = a |\xi|$ and $h _ L ( \xi) = b |\xi|$,  the log-Brunn-Minkowski sum $( 1- \l) K + _ 0 \l L $  is the interval with support function
$h _ K ( \xi) ^ { 1- \l } h _ L (\xi ) = a ^ { 1- \l} b ^   \l  |\xi |$,
namely  we have as well 
\[
( 1- \l) \cdot  K +_0 \l \cdot  L  =   \big  (- a ^ { 1- \l} b ^ \l , a ^ { 1- \l} b ^ \l \big  )\,.
\qedhere
\]
\end{proof}

\bigskip

When $K$ is a  centrally symmetric convex body in  $\R^2$, setting 
\[
K _ { x_1}:= \big \{ x _ 2 e _ 2 \, :\, x_ 1 e _ 1 + x _ 2 e _ 2 \in K \big \}\,,
\] 
the components $(u_1,u_2)$ of the $(e_1, e_2)$-i.d.~field of $K$ satisfy 
\begin{itemize}
\item $t_1 \mapsto u _ 1(t_1)$ is the i.d.~function of the map
$x _ 1 \mapsto  
\mathcal H ^ {1}  ( K _ {x_1})$, 
hence
$$ u _ 1 ' ( t_1) = \frac{|K| }{\mathcal H ^ {1} ( K _ {u_1(t_1)})} \qquad \text{ for $\mathcal L ^ 1$-a.e. } t _ 1\in (0, 1) \,;$$
\item  for $\mathcal L ^ 1$-a.e.\ $t _1\in (0, 1)$, $t_2 \mapsto u _ 2 ( t_1 ,t_2)$ is the i.d.~function of the map 
$x _ 2 \mapsto  %\int _{ \R ^ {n-2} } \chi _K (u _1 ( t_1) e _ 1 +  x _2 e _2 + x _ 3 e _ 3 + \dots + x_n e _n)  \, d x _ 3 \dots \, dx_n=
 \chi_{ K _ { u _ 1 ( t_1 )}}  (x_2)$;
hence,
\[
\frac{ \partial u _ 2}{\partial t _ 2}   (t_1, t _ 2)=  { \mathcal H ^ {1} ( K _ { u _ 1 ( t_1 )} )}   \qquad \text{ for $\mathcal L ^ 1$-a.e. } t _ 2\in (0, 1) \,.
\] 
\end{itemize} 
As a consequence, we prove below that the standard geometric combination of centrally symmetric planar convex bodies preserves convexity (and always produces an unconditional set).

\medskip 
\begin{proposition}\label{p:convexity} 
Let
$K$ and $L$ be centrally symmetric convex bodies in $\R^2$, and let $\lambda \in (0,1)$. 
Then the standard $(e_1, e_2)$-geometric combination $K \star _\l L$ is an unconditional convex body. 
\end{proposition}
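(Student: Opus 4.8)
The plan is to exhibit $K \star_\l L$ as the region enclosed between the graphs of $\pm\psi$ for a suitable \emph{even, concave} function $\psi$; any such region is automatically unconditional and convex. Throughout, write $\ell_K(x_1) := \mathcal H^1(K_{x_1})$ and $\ell_L(x_1) := \mathcal H^1(L_{x_1})$ for the lengths of the vertical slices, and recall from the two itemized computations preceding the statement that $u_1'(t_1) = |K|/\ell_K(u_1(t_1))$, $\partial_{t_2} u_2 = \ell_K(u_1(t_1))$, and likewise for $v_1,v_2$. First I would make the standard potential explicit. Since the slice $K_{u_1(t_1)}$ is a segment, $t_2\mapsto u_2(t_1,t_2)$ is affine with slope $\ell_K(u_1(t_1))$, so the standard geometric primitive in the second variable (anchored by $w_{\l,2}(t_1,1/2)=0$) is $w_{\l,2}(t_1,t_2) = \ell_K(u_1(t_1))^{1-\l}\,\ell_L(v_1(t_1))^{\l}\,(t_2-\tfrac12)$, while $w_{\l,1}(t_1) = \int_{1/2}^{t_1}(u_1')^{1-\l}(v_1')^{\l}$. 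Consequently, over the abscissa $x_1=w_{\l,1}(t_1)$ the body $K\star_\l L$ has the \emph{symmetric} vertical slice $(-r(t_1),r(t_1))$, with $r(t_1):=\rho_K(t_1)^{1-\l}\rho_L(t_1)^{\l}$, $\rho_K:=\tfrac12\ell_K(u_1)$, $\rho_L:=\tfrac12\ell_L(v_1)$. The point here is that anchoring at $t_2=1/2$ recenters every slice at the origin, so the slices become symmetric even though those of $K$ and $L$ need not be.

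Next I would record the symmetries. Central symmetry of $K$ gives $\ell_K(-x_1)=\ell_K(x_1)$, whence $u_1$ is odd about $1/2$ and $u_1'$ is even about $1/2$; the same holds for $v_1$, so the integrand defining $w_{\l,1}$ is even about $1/2$. Therefore $w_{\l,1}$ is odd and $r$ is even about $1/2$, and we may write $K\star_\l L=\{(x_1,x_2): x_1\in\ran w_{\l,1},\ |x_2|<\psi(x_1)\}$ with $\psi:=r\circ w_{\l,1}^{-1}$ even and $\ran w_{\l,1}$ a symmetric interval. This already shows that $K\star_\l L$ is unconditional, and it reduces the convexity statement to the concavity of $\psi$.

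Finally I would prove that $\psi$ is concave. From $u_1'=|K|/\ell_K(u_1)$ and $v_1'=|L|/\ell_L(v_1)$ one gets $w_{\l,1}'=c/(2r)$ with $c:=|K|^{1-\l}|L|^{\l}$, so the chain rule yields $\tfrac{d\psi}{dx_1}=(r^2)'(t_1)/c$; since $t_1\mapsto x_1$ is increasing, $\psi$ is concave as soon as $t_1\mapsto r^2(t_1)$ is concave. Now $r^2=(\rho_K^2)^{1-\l}(\rho_L^2)^{\l}$, and a direct computation gives $(\rho_K^2)'(t_1)=\tfrac{|K|}{2}\,\ell_K'(u_1(t_1))$, which is non-increasing because $\ell_K$ is concave (slice lengths of a convex body are concave) and $u_1$ is non-decreasing; hence $\rho_K^2$ and, symmetrically, $\rho_L^2$ are concave in $t_1$. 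Since the weighted geometric mean $(a,b)\mapsto a^{1-\l}b^{\l}$ is concave and non-decreasing on $\R_+^2$, it carries the concave pair $(\rho_K^2,\rho_L^2)$ to the concave function $r^2$, and the proof is complete. The passage from ``$\tfrac{d\psi}{dx_1}$ non-increasing'' to ``$\psi$ concave'' is justified by the local absolute continuity recorded in Remark~\ref{r:acloc}.

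I expect the crux to be this last concavity step. The decisive observation is that one must pass to the \emph{squares}: the factor $u_1'\propto 1/\ell_K$ is exactly what turns $(\rho_K^2)'$ into $\tfrac{|K|}{2}\,\ell_K'(u_1)$, a manifestly monotone quantity, after which the stability of concavity under the weighted geometric mean closes the argument. Everything else is bookkeeping about the explicit form of the standard potential and the parity forced by central symmetry.
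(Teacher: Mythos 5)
Your proof is correct and takes essentially the same route as the paper: unconditionality from the parity of the standard potential, and convexity reduced to concavity of the squared slice function in the $t_1$-variable, where your key identity $(\rho_K^2)'=\tfrac{|K|}{2}\,\ell_K'(u_1)$ is exactly the paper's computation $(\gamma_K^2)'(s)=2|K|\,\psi_K'(u_1)$ (with $\gamma_K=2\rho_K$), followed by the same concavity-of-the-geometric-mean step. The only difference is organizational: the paper packages the ``pass to squares'' trick as an equivalence (a centrally symmetric body is convex iff $\gamma^2$ is concave, given unconditionality) applied to $K$, $L$ and $K\star_\l L$, whereas you carry out the chain rule explicitly via $w_{\l,1}'=c/(2r)$; the mathematical content is identical.
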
 

\begin{proof} 

For brevity, we are going to denote by $M$ the standard $(e_1, e_2)$-geometric combination $K \star _\l L$ (where $\l$ is fixed in $[0, 1]$). 

To see that $M$ is unconditional, recall that $M$ is the image of  the standard geometric potential $W _\l$ of the i.d.~fields $(U, V)$ of $K$ and $L$. 
In view of the above expressions of $u' _ 1 ( t_1)$ and $\frac{ \partial u _ 2}{\partial t _ 2}$ (and of their analogue for the derivatives of the components of $V$), 
we have 
$$W_{ \l}  ( t_1, t_2) = \Big ( \int _{ \frac{1}{2}}  ^ { t _ 1} \!\!\! \Big (  \frac{  |K| }{\gamma _K ( s)}  \Big )  ^ { 1 - \l}  \Big ( \frac{ |L | }{\gamma _L (s) }   \Big ) ^ \l  \, ds \, , \   
 \gamma _K ( t_1)  ^ { 1 - \l} \gamma_L ( t_1) ^ \l  \Big ( t _ 2 - \frac{1}{2} \Big )  \Big ) \quad \forall ( t_1, t _2) \in ( 0,1)^2
  \,, 
  $$ 
where we have set for brevity \begin{equation*}%\label{f:gamma} 
 \gamma _K (t_1) :=  \mathcal H ^ 1 (K _ { u _ 1 (t_1 ) } )  \, , \quad \gamma _L (t_1) :=  \mathcal H ^ 1 (L _ { v _ 1 (t_1 ) } )  \qquad \forall t _ 1 \in (0, 1) \, .
 \end{equation*} 
 
It readily follows that the components of $W _\l$ satisfy,
for every $\delta\in (0, 1/2)$, 
\[
\begin{array}{ll} & w _ {\l,1 } (\frac{1}{2} + \delta) = -  w _{\l,1} (\frac{1}{2} - \delta)\,,\\  \noalign{\bigskip} 
& w _ {\l,2} (t_1, \frac{1}{2} + \delta) = -  w _ {\l,1} (t_1, \frac{1}{2} - \delta ) \quad \forall t _ 1 \in (0, 1) \,, 
\end{array}
\] 
which shows that $M$ is unconditional.

We now turn attention to convexity. 
We claim that the convexity of a generic centrally symmetric set $K$ is related to the concavity of the corresponding  function $\gamma _K ^2$ as follows: 
\begin{eqnarray}
&K \text {  convex} \   \Rightarrow \ 
\gamma _K ^ 2  \text { concave} & \label{f:convexity1}  
\\ \noalign{\medskip} 
& K \text{ unconditional  and } \gamma _K ^ 2  \text { concave}   \   \Rightarrow \ 
K \text { convex.}%\Gamma_K   \text { is concave} \    \Leftrightarrow\  
& \label{f:convexity2} 
 \end{eqnarray}

Specifically, setting  $\psi _K (x_1):= \mathcal H ^ 1 ( K _ { x_1}) $, 
we have that: if $K$ is convex, the map $\psi _K$ is concave; viceversa, if $\psi _K$ is concave and $K$ is symmetric about the $x_1$-axis (and hence unconditional since it is assumed to be centrally symmetric), then $K$ is 
convex. 

In view of this observation,  to obtain  \eqref{f:convexity1}-\eqref{f:convexity2},  it is enough to show that
the concavity of $\psi _ K$  is equivalent to the concavity of $\gamma _ K ^ 2$. 
From the definition of the distribution function $u _1$, we have that
\[
\psi _ K  (u _ 1  (s))   u' _ 1 (s)  =   |K|\,,
\qquad\text{for a.e.}\ s\in (0,1) \,.
\]  
Since $\gamma _ K ( s) =  \psi _ K (u _ 1 ( s))$, squaring both sides and differentiating with respect to $s$  gives
\[
(\gamma^ 2 _ K ) ^ {\prime} ( s)  = 2  \psi _K ( u_1) \psi ^ {\prime} _K ( u_1) u_1 ^ {\prime}  = 2  |K| \psi_K ^ {\prime} ( u_1 ) 
\]
 which shows that  $(\gamma^ 2 _ K ) ^ {\prime} $ is nondecreasing if and only if $\psi_K ^ {\prime}$ is (recall indeed that $u _ 1$ is increasing).

Now, thanks to \eqref{f:convexity1}-\eqref{f:convexity2}, and since we have already proved that $M$ is unconditional, in order to prove that $M$ is convex, we are reduced to show that
\begin{equation}\label{f:meanconv} 
\gamma _K ^ 2  \text{ and } \gamma _L ^ 2  \text { concave}\ \Longrightarrow \gamma _M ^ 2  \text { concave} \,.
\end{equation} 
We observe that, by the definition of $M$, it holds that
$\gamma_M = \gamma _K   ^ { 1 - \l} \gamma _L  ^ \l$. 
Then the validity of the implication \eqref{f:meanconv} follows from the fact that the geometric mean of two nonnegative concave function is still concave. For the sake of completeness, we enclose the elementary proof. Denoting by $\varphi$ and $\psi$ the two functions, we have 
\[
\begin{split}
\varphi  ^ { 1 - \l } & ( ( 1 - \theta) s + \theta t )  \psi ^ \l ( ( 1 - \theta) s + \theta t )  
\\ %\noalign{\medskip} 
& \geq  ( ( 1 - \theta)  \varphi   (s) + \theta \varphi (t) )^ { 1 - \l }    ( ( 1 - \theta) \psi (s) + \theta\psi ( t) ) ^ \l    
\\  %\noalign{\medskip} 
& \geq  ( 1 - \theta) \varphi ( s) ^ {1 - \l} \psi ( s) ^ \l + \theta  \varphi ( t) ^ {1 - \l} \psi ( t) ^ \l  \,,
\end{split}
\]
where in the last line  we have exploited the inequality $(x_ 1 + y _ 1)  ^ { 1 - \l} ( x _ 2 + y _ 2) ^ \l \geq 
x_ 1 ^ { 1 - \l} x_2 ^ \l + y_ 1 ^ { 1 - \l} y_2 ^ \l$, holding for nonnegative numbers $x_1, x_2, y_1, y _ 2$ (as it follows by adding the a.m.-g.m. inequality applied separately to the pair  $(\frac{x_1}{x_1+ y_1}, \frac{x_2}{x_2 + y _ 2})$ and to the pair
$(\frac{y_1}{x_1+ y_1}, \frac{y_2}{x_2 + y _ 2})$). 
\end{proof}

\bigskip
Below we give some explicit examples of geometric combinations in dimension $2$.

 \begin{example}[geometric combination of two rectangles]\label{e:rectangles}
 If $K= Q ( a_1, a_2)$ is the centrally symmetric rectangle with vertices at  
$(\pm a_1, \pm a_2)$, with $a_i >0$, we have $\mathcal H ^ 1 (K _ { u _ 1 ( s) }) = 2 a_2 $, so that the i.d.~field of $K$ is given by 
 $$U ( t_1, t_2) = \big (   a_1 ( 2 t _ 1 - 1)  \, , \ a_2 ( 2 t _ 2 -1) \big ) \qquad \forall (t_1, t_2) \in (0, 1) ^ 2 \,.$$ 
If we take another rectangle $L = Q ( b_1, b_2)$ with i.d.~field $V$, 
the standard $(e_1, e_2)$-geometric potential of $(U, V)$ in proportion $\lambda$  has components  
\[
W_{\l} ( t_1, t _2) = \big ( a_1^{1-\l}  b_1^{\l} ( 2 t _ 1 - 1)  \, , \  
a_2^{1-\l} b_2^{\l} ( 2 t _ 2 -1)  \big )  \,, \qquad \forall (t_1, t_2) \in (0, 1) ^ 2 \,.
\]
Hence, 
\[
Q ( a_1, a_2) \star_{\lambda } Q ( b_1, b_2) = Q ( a_1^{1-\l}  b_1^{\l}, a_2^{1-\l} b_2^{\l} )\,.
\]
 \end{example}

 \begin{example}[geometric combination of two parallelograms with parallel sides]
 By arguing as in the previous example, it is immediate to obtain that the $(z_1, z_2)$-geometric combination in proportion $\lambda$ of two parallelograms with sides parallel to the vectors $(z_1, z_2)$, of lengths $(2a_1, 2a_2)$ and $(2b_1, 2b_2)$ respectively, is still a 
 parallelograms with sides  parallel to $(z_1, z_2)$, of lengths  $( 2a_ 1 ^ { 1 - \l} b _ 1 ^ \l, 2 a_ 2 ^ { 1 - \l}  b_ 2 ^ \l)$. 
\end{example}

\begin{example} [geometric combination of two rhombi]\label{e:rhombi}
 If $K = R ( a_1, a _ 2)$ is the centrally symmetric rhombus with one vertex at $(a_1, 0)$ and another one at $(0, a_2)$, with $a_ i >0$, we have $\mathcal H ^ 1 (K _ { u _ 1 ( s) }) = 2 a_2 \sqrt { 2 s}$,
  so that the i.d.~field of $K$ is given by 
 $$U ( t_1, t_2) = \big (   a _ 1 \big ( \sqrt { 2t_1} - 1 \big )   \, , \  a_2 \sqrt { 2 t _ 1} ( 2 t _ 2 -1)  \big ) \qquad \forall (t_1, t_2) \in (0, 1) ^ 2 \,.$$ 
If we take another rhombus $L = R ( b_1, b_2)$ with i.d.~field $V$, and we choose for instance $\lambda = \frac{1}{2}$, 
 the standard $(e_1, e_2)$-geometric potential of $(U, V)$ in proportion $\frac{1}{2}$  has components  
 $$W_{1/2} ( t_1, t _2) = \big (   \sqrt {a_1  b_1} ( \sqrt{ 2 t _ 1} - 1)  \, , \  \sqrt{b_1 b_2}  \sqrt{ 2 t _ 1} ( 2 t _ 2 -1)  \big )  \,, \qquad \forall (t_1, t_2) \in (0, 1) ^ 2 \,. $$ 
Hence, $$R ( a_1, a_2) \star_{\frac{1}{2} } R ( b_1, b_2) = R ( \sqrt {a_1 a_2}, \sqrt {b_1 b_2})\,.$$  
 \end{example}

\begin{example} [geometric combination of a rectangle and a rhombus] Using the same notation as in Examples \ref{e:rectangles} and \ref{e:rhombi},
 if $K= Q ( a_1, a_2)$ and $L = R ( b_1, b_2)$, 
 the  standard $(e_1, e_2)$-geometric potential in proportion $1/2$ has components  
 $$W_{1/2} ( t_1, t _2) = \Big (   \frac{4}{3}2  ^ {\frac{1}{4}}   \sqrt{a_1 b_1} \big ( t _ 1 ^ {\frac{3}{4}}    -   \big ( \frac{1}{2} \big )  ^ {\frac{3}{4}}  \big )  , 2  ^ {\frac{1}{4}} 
 \sqrt { a_2 b_2} t_1 ^ { \frac{1}{4}} \,  \big ( 2 t _ 2 - 1 \big  )  \Big ) \qquad \forall (t_1, t_2) \in (0, 1) ^ 2   \,.  $$

 Taking e.g. $a _ 1 = a _2 = b _ 1= b _ 2= 1$, one quarter of the boundary of $Q ( 1, 1) \star_ {\frac{1}{2}} R ( 1, 1)$ is the curve
 $$ 2  ^ {\frac{1}{4}} \Big ( \frac{4}{3} \big ( t _ 1 ^ {\frac{3}{4}}    -   \big ( \frac{1}{2} \big )  ^ {\frac{3}{4}} \big ), - t _ 1 ^ {\frac{1}{4}}   \Big ), \qquad t _ 1 \in \big  (0, \frac{1}{2}\big ) \, ,  $$
 namely the graph of the function 
 $$x (y) = - \frac{2 \sqrt 2}{3} (y ^ 3 + 1)\, ,  \qquad y \in (-1, 0)\,,$$  
 %$$y ( x) =  -  \Big (\frac{3}{4} x + \frac{1}{\sqrt 2} \Big ) ^ { \frac{1}{3}}\,, \qquad x \in  \big ( - \frac{4}{3\sqrt 2 } , 0) \,,$$
 see Figure \ref{fig:1}.

    %\put(0.5 , 0 )
    \begin{figure}[h] 
    \includegraphics[height=5cm]{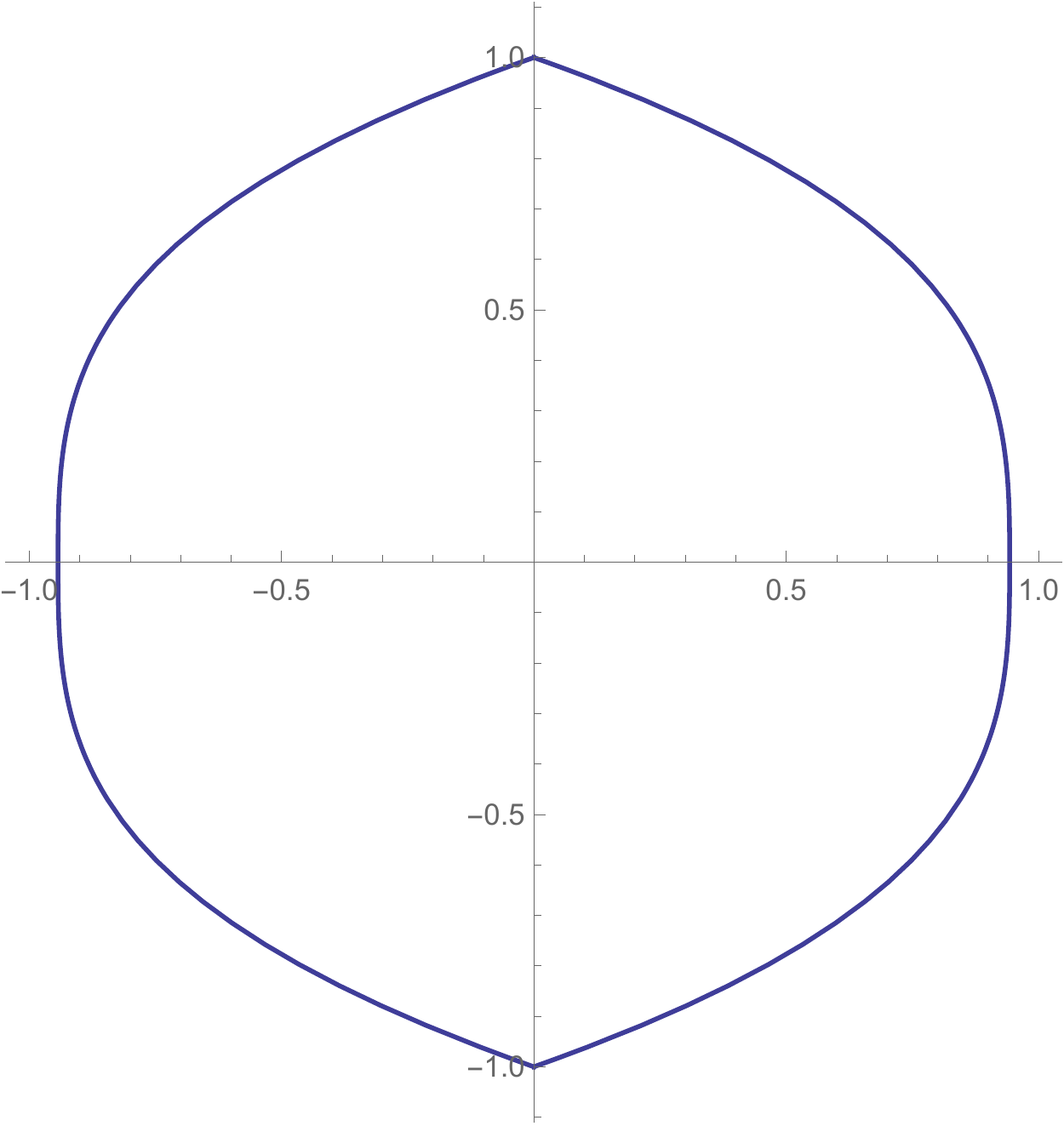}
\caption{The geometric combination $Q ( 1, 1) \star_ {\frac{1}{2}} R ( 1, 1)$ } 
\label{fig:1} %
\end{figure} 

 \end{example}

 \subsection{Unconditional convex bodies}\label{sec:unconditional} 
 
When $K$ is an unconditional convex body in $\R^n$, setting 
\[
K _ { x_1, \dots, x _i  } := \Big \{ ( x _ {i+1} e _ { i+1}+  \dots + x _n  e_n) \ :\ (x_1 e _ 1 + \dots +  x _ i e _ i +  x _{i+1} e _ { i+1} +  \dots + x _n e_n ) \in K \Big \}\,,
\] 
the components $(u_1, \dots, u _n)$ of the $(e_1, \dots, e_n)$-i.d.~field of $K$ satisfy 
\begin{itemize}
\item $t_1 \mapsto u _ 1(t_1)$ is the i.d.~function of the map
$x _ 1 \mapsto  
\mathcal H ^ {n-1}  ( K _ {x_1})$, 
hence
$$ u _ 1 ' ( t_1) = \frac{|K| }{\mathcal H ^ {n-1} ( K _ {u_1(t_1)})} \qquad \text{ for $\mathcal L ^ 1$-a.e. } t _ 1\in (0, 1) \,;$$
\item  for $\mathcal L ^ 1$-a.e.\ $t _1\in (0, 1)$, $t_2 \mapsto u _ 2 ( t_1 ,t_2)$ is the i.d.~function of the map 
$x _ 2 \mapsto  %\int _{ \R ^ {n-2} } \chi _K (u _1 ( t_1) e _ 1 +  x _2 e _2 + x _ 3 e _ 3 + \dots + x_n e _n)  \, d x _ 3 \dots \, dx_n=
\mathcal H ^ {n-2} ( K _ { (u _ 1 ( t_1 ), x_2)} )$;
hence,
\[
\frac{ \partial u _ 2}{\partial t _ 2}   (t_1, t _ 2)=  \frac{ \mathcal H ^ {n-1} ( K _ { u _ 1 ( t_1 )} )}{\mathcal H ^ {n-2} ( K _ { u _ 1 ( t_1 ), u_2(t_1, t_2)} )}  \qquad \text{ for $\mathcal L ^ 1$-a.e. } t _ 2\in (0, 1) \,; 
\] 
\item $\dots$
\item for $\mathcal L ^ {n-1}$-a.e. $(t _1, \dots, t _{n-1}) \in (0, 1) ^ { n-1}$, $t _n \mapsto u _n ( t _1, \dots, t _{n-1}, t _n)$ is the   i.d.~function of the map 
$x _n \mapsto 
\chi _K (    u _1 ( t_1) e _ 1 +  \dots + u _ {n-1} (t_{n-1}) e _ { n-1} +  x_n e _n)  $; hence,
\[
\frac{ \partial u _ n}{\partial t _ n}   (t_1, \dots, t _ {n-1}, t _n)=  \mathcal H ^ {1} ( K _ { u _ 1 ( t_1 ), \dots,  u _ {n-1} ( t _1, \dots, t _{n-1}  )}) \qquad \text{ for $\mathcal L ^ 1$-a.e. } t _ n\in (0, 1)  \,.
\] 
\end{itemize}

\medskip
\begin{remark}\label{r:interpolation}   
Given  an unconditional convex body  $K$, the maps
\[
s \mapsto  \mathcal H ^ { n-i} \left( K _ {u _ 1 ( t_1), \dots, u _ {i-1} ( t _1, \dots, t_{i-1}, s )   } \right ) \qquad i = 1, \dots, n\,,
\]
are continuous and strictly positive on the interior of their support (which is an interval).  
Hence, from their definition above, the components $(u_1, \dots, u _n)$ of the $(e_1, \dots, e_n)$-i.d.~field of $K$ 
satisfy
$$t _ i \mapsto u _ i ( t _1, \dots, t _ i) \in {\rm Lip}_{\rm loc} (0, 1) \qquad i = 1, \dots, n\,.$$ 
In view of Proposition \ref{p:contl} we infer that, given two  unconditional convex body  $K$ and $L$, the map $\lambda \mapsto K \star_\l L$ 
is a continuous interpolation in $L ^ 1$ from $K \star_0 L = K$ to  $K \star_1 L = L$. In addition, for $n = 2$, by Proposition \ref{p:convexity} this interpolation is
made of convex bodies. 
\end{remark}
 
\begin{proposition}\label{p:inclusion1} 
Let
$K$ and $L$ be unconditional convex bodies in $\R ^ n$, and let $\lambda \in [0,1]$.
Then the standard $(e_1, \dots, e _n)$-geometric combination $K \star_\l  L$  enjoys the following properties:
\begin{itemize}
\item[(i)] it is an unconditional set;
\item[(ii)] it satisfies  the inclusion
\begin{equation}\label{f:inclusion} K \star_\l  L \subseteq {( 1- \l)  \cdot K +_0 \l \cdot L }\,.
\end{equation}
\end{itemize} 
In particular, the equality  $| K \star_\l L | = |K| ^ { 1 - \l} |L| ^ \l$ given by Theorem \ref{t:nD}  implies the log-Brunn-Minkowski inequality for  unconditional convex bodies.
\end{proposition}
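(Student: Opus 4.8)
The plan is to establish the two items in turn and then read off the log-Brunn-Minkowski inequality. Throughout I work with the standard geometric potential $W_\l$, whose components $w_{\l,i}$ are obtained by integrating $(\partial u_i/\partial t_i)^{1-\l}(\partial v_i/\partial t_i)^\l$ from $1/2$, and I use the explicit slice formulas for $\partial u_i/\partial t_i$ recorded just before the statement.

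For item (i), I would argue as in the planar computation in the proof of Proposition~\ref{p:convexity}, exploiting the antisymmetry of each component of $W_\l$ about the midpoint of its last variable. Since $K$ is unconditional, for fixed $(t_1,\dots,t_{i-1})$ the slice density $s\mapsto \mathcal H^{n-i}(K_{u_1(t_1),\dots,s})$ is even in $s$; hence its i.d.\ function $s\mapsto u_i(t_1,\dots,t_{i-1},s)$ is odd about $1/2$ (so in particular $u_i(\dots,1/2)=0$), its partial derivative is even about $1/2$, and therefore the standard primitive $w_{\l,i}$ is again odd in $t_i$ about $1/2$. I would then check that the reflection $t_j\mapsto 1-t_j$, all other parameters fixed, negates exactly the $j$-th coordinate of $W_\l(t)$ and leaves the others unchanged: the components with $i<j$ do not depend on $t_j$; the component $w_{\l,j}$ is odd in $t_j$; and for $i>j$ unconditionality makes the slice densities insensitive to the sign of the $j$-th base coordinate, so by induction the functions $u_i,v_i$ and hence $w_{\l,i}$ are unaffected. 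This yields invariance of $\ran W_\l$ under every coordinate reflection, i.e.\ unconditionality.

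For item (ii), I would first reduce the defining inequality of the $0$-sum to directions $\xi$ in the positive orthant. Given $x=W_\l(t)$ and $\xi\in S^{n-1}$, item (i) lets me flip a suitable subset of the parameters to obtain $\tilde t\in[1/2,1)^n$ with $W_\l(\tilde t)=(|x_1|,\dots,|x_n|)$, so that $x\cdot\xi\le \sum_i w_{\l,i}(\tilde t)\,|\xi_i|$; since $h_K(\xi)=h_K(|\xi|)$ and $h_L(\xi)=h_L(|\xi|)$ for unconditional bodies, where $|\xi|=(|\xi_1|,\dots,|\xi_n|)$, it suffices to bound $\sum_i w_{\l,i}(\tilde t)\,\xi_i$ by $h_K(\xi)^{1-\l}h_L(\xi)^\l$ assuming $\xi_i\ge 0$ and $\tilde t\in[1/2,1)^n$. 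The key pointwise estimate is the componentwise bound
\[
w_{\l,i}(t)\le u_i(t)^{1-\l}\,v_i(t)^{\l}\qquad (t_i\ge 1/2),
\]
obtained by applying H\"older's inequality to the integral defining $w_{\l,i}$ and using that the primitives vanish at $1/2$ (so integrating $\partial u_i/\partial t_i$ from $1/2$ returns $u_i(t)$, and likewise for $v_i$).

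To finish (ii), I would chain these estimates: for $\xi$ in the positive orthant and $t\in[1/2,1)^n$,
\[
W_\l(t)\cdot\xi=\sum_i w_{\l,i}(t)\,\xi_i\le \sum_i \big(u_i(t)\xi_i\big)^{1-\l}\big(v_i(t)\xi_i\big)^\l\le\Big(\sum_i u_i(t)\xi_i\Big)^{1-\l}\Big(\sum_i v_i(t)\xi_i\Big)^\l=(U(t)\cdot\xi)^{1-\l}(V(t)\cdot\xi)^\l,
\]
where the second inequality is the iterated superadditivity estimate $(\sum a_i)^{1-\l}(\sum b_i)^\l\ge\sum a_i^{1-\l}b_i^\l$ already proved in Proposition~\ref{p:convexity}. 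As $U(t)\in K$ and $V(t)\in L$ give $U(t)\cdot\xi\le h_K(\xi)$ and $V(t)\cdot\xi\le h_L(\xi)$, this yields $W_\l(t)\cdot\xi\le h_K(\xi)^{1-\l}h_L(\xi)^\l$ and hence the inclusion \eqref{f:inclusion}. The final assertion is then immediate: monotonicity of volume together with the equality $|K\star_\l L|=|K|^{1-\l}|L|^\l$ from Theorem~\ref{t:nD} gives $|K|^{1-\l}|L|^\l\le|(1-\l)\cdot K+_0\l\cdot L|$. I expect the main obstacle to be the bookkeeping in item (i), namely tracking how the reflection $t_j\mapsto 1-t_j$ propagates through the triangular Knothe-type dependence of the later components; by contrast the analytic core of (ii) is routine, its only delicate point being that the standard normalization $u_i(\dots,1/2)=0$, itself a consequence of unconditionality, is exactly what turns the H\"older step into the clean bound $u_i^{1-\l}v_i^\l$.
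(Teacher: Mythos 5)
Your proposal is correct, and while item (i) follows essentially the paper's own argument (the same antisymmetry computation $w_{\l,i}(t_1,\dots,t_{i-1},\tfrac12+\delta)=-w_{\l,i}(t_1,\dots,t_{i-1},\tfrac12-\delta)$ via evenness of the derivatives about $1/2$ --- though you are more explicit than the paper about why flipping $t_j\mapsto 1-t_j$ leaves the later components untouched, a point the paper leaves implicit in the triangular structure), your item (ii) takes a genuinely different route. The paper does not verify the support inequality of the $0$-sum directly: it proves the stronger intermediate inclusion $K\star_\l L\subseteq K^{1-\l}\cdot L^{\l}$ into the product body (using the same componentwise H\"older bound \eqref{f:componenti}, followed by a convexity and coordinatewise-scaling argument to show that dominating each coordinate suffices for membership), and then invokes \cite[Lemma~4.1]{Sar} for $K^{1-\l}\cdot L^{\l}\subseteq (1-\l)\cdot K+_0\l\cdot L$. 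Your chain
\[
\sum_i w_{\l,i}(t)\,\xi_i\le \sum_i \bigl(u_i(t)\xi_i\bigr)^{1-\l}\bigl(v_i(t)\xi_i\bigr)^{\l}\le \bigl(U(t)\cdot\xi\bigr)^{1-\l}\bigl(V(t)\cdot\xi\bigr)^{\l}\le h_K(\xi)^{1-\l}h_L(\xi)^{\l}
\]
in effect inlines the proof of Saroglou's lemma, since the superadditivity inequality $\sum_i a_i^{1-\l}b_i^{\l}\le(\sum_i a_i)^{1-\l}(\sum_i b_i)^{\l}$ is exactly its content (and is proved in the paper itself, within Proposition~\ref{p:convexity}). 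What each approach buys: yours is more self-contained, avoiding both the external citation and the convexity argument needed to pass from \eqref{f:componenti} to membership in the product body; the paper's factorization records the sharper geometric fact \eqref{f:inclusionprod}, i.e.\ containment in the product body, which is strictly stronger information than \eqref{f:inclusion} alone. One small point of care: your parenthetical claim that integrating $\partial u_i/\partial t_i$ from $1/2$ ``returns $u_i(t)$'' uses local absolute continuity of the i.d.~functions, which for convex bodies is guaranteed by Remark~\ref{r:interpolation}; in any case only the inequality $\int_{1/2}^{t_i}\partial_s u_i\,ds\le u_i(t)$, valid for any nondecreasing function, is needed for your H\"older step, so the argument stands either way.
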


\begin{proof}   
(i) Let us
check,  for every fixed $i \in \{ 1, \dots, n\}$, the implication
\begin{equation}\label{f:symmetry} 
(x_1, \dots, x_i, \dots, x _n ) \in K \star_\l   L \ \Longrightarrow  \  (x_1, \dots, - x_i, \dots, x _n ) \in K \star_\l  L  \,.
\end{equation}
By construction, $K \star_\l  L$ agrees with the image of the  standard geometric potential $W_\l$, i.e., 
$$ K \star_\l  L = \Big \{  \big (w_{\l, 1} ( t_1), w _ {\l,2} ( t_1, t _ 2), \dots, w _{\l,n} ( t_1, \dots, t _n)  \big )   \ :\ t _ i \in (0,1)\Big \}\, .  $$ 
From the expressions of $\frac{\partial u _i}{\partial t _i}$ recalled at the beginning of Section \ref{sec:unconditional} we see that, since
$K$ is unconditional, the components $u _i$ of its i.d.~field satisfy 
\begin{equation}\label{f:even} 
\frac{\partial u _i}{\partial t _i} ( t_1, \dots,t_{i-1},  s) = \frac{\partial u _i}{\partial t _i} ( t_1, \dots,t_{i-1},  1-s) \qquad \text{ for } \mathcal L ^ 1\text{-a.e.}\  s \in (0,1) \,, 
\end{equation} 
and similarly for the components $v _i$ of the i.d.~field   of $L$. 

Then,  for every $i = 1, \dots, n$,  $\delta \in (0 , \frac{1}{2})$ and $t _ 1, \dots, t _ {i-1} \in (0,1)$,
it holds that
\[
\begin{array}{ll} w _{\l, i} ( t _1, \dots, t _ { i-1}, \frac {1}{2} + \delta )&  \displaystyle =  \int _{1/2} ^ { \frac {1}{2} + \delta } \frac{\partial u _i}{\partial t _i} ( t_1, \dots,t_{i-1},  s)  ^ {1- \l} \frac{\partial v _ i}{\partial t _i} ( t _ 1 , \dots, t _{i-1}, s)   ^ {\l}  \, ds  
\\  \noalign{\bigskip} &  \displaystyle =  \int _{\frac{1}{2}} ^ { \frac {1}{2} + \delta } \frac{\partial u _i}{\partial t _i} ( t_1, \dots,t_{i-1},  1-s)  ^ {1- \l} \frac{\partial v _ i}{\partial t _ {i}} ( t _ 1 , \dots, t _{i-1}, 1- s)   ^ {\l}  \, ds 
\\  \noalign{\bigskip} &  \displaystyle =  - \int _{\frac{1}{2}- \delta} ^ { \frac {1}{2}  } \frac{\partial u _i}{\partial t _i} ( t_1, \dots,t_{i-1},  s')  ^ {1- \l} \frac{\partial v _ i}{\partial t _ {i}} ( t _ 1 , \dots, t _{i-1}, s')   ^ {\l}  \, ds'
\\  \noalign{\bigskip} &=  - w _ {\l,i} ( t _1, \dots, t _ { i-1}, \frac {1}{2} - \delta ) \,,
\end{array} 
\]
where the first and fourth equalities hold by definition of $w _{\l,i}$, the third one by the change of variable $s' = 1-s$, and the second one is due to the fact that the functions  $u _i$ (and $v_i$) satisfy \eqref{f:even}. This shows
the implication \eqref{f:symmetry}.

\smallskip 
(ii) In order to show the inclusion \eqref{f:inclusion},  it is enough to prove that
\begin{equation}\label{f:inclusionprod} 
K \star_\l  L \subseteq K ^ { 1-\l} \cdot  L ^ \l \,,
\end{equation}
 where
 $$K ^ { 1 - \l} \cdot  L ^ \l:= \Big  \{ \big (  \pm  |x_1| ^ {1 - \l} |y_1| ^ {\l} ,  \dots , \pm  |x_n| ^ {1 - \l} |y_n| ^ {\l} \big ) \ :\ (x_1, \dots, x _n) \in K \, , \   (y_1, \dots, y _n) \in L \Big \}  \,.$$ 
Indeed, \eqref{f:inclusion} will follow from \eqref{f:inclusionprod} because 
$K ^ { 1-\l}  \cdot L ^ \l \subseteq ( 1- \l)\cdot K +_0 \l  \cdot L$ 
(see \cite[Lemma~4.1]{Sar}). 

Since we now  know that both sets $K \star_\l  L$ and $ K ^ { 1-\l} \cdot  L ^ \l$ are unconditional, we are reduced to show that
\begin{equation*}%\label{f:inclusionsym} 
(K \star_\l  L) \cap \R ^n _+  \subseteq \big ( K ^ { 1- \l} \cdot L ^ \l  \big ) \cap \R ^n _ + \,.
\end{equation*} 
Let us consider a  generic element of $(K \star_\l  L) \cap \R ^n _+$, which will be of the form
\begin{equation}\label{f:generic} 
W_\l (t):= \big( w _{\l, 1} ( t_1), w _ {\l,2} ( t_1, t_2), \dots, w _{\l,n} ( t _1, \dots , t _n)  \big ) 
\end{equation}  
for some $t = (t_1, \dots, t _n)$ with $t _i \in( {1}/{2}, 1)$ for every $i = 1, \dots, n$. 

By applying H\"older's inequality with conjugate exponents $\frac{1}{1 - \l}$ and $\frac{1}{\l}$, we see that
\begin{equation}\label{f:componenti} 0 \leq w _{\l, i} ( t _ 1, \dots , t _i) \leq u _ i ( t _ 1, \dots , t _i)  ^ { 1 - \l} v _ i ( t _ 1, \dots , t _i)  ^ {  \l}  \qquad \forall i = 1, \dots, n\,. 
\end{equation} 
Since the vector
$$\big( u _ 1 ( t_1) ^ {1 - \l} v _ 1 ( t_1) ^{ \l}  , u _ 2 ( t_1, t_2)^ {1 - \l} v _ 2 ( t_1, t_2)^ { \l} , \dots, u _n ( t _1, \dots , t _n)^ {1 - \l} v _n ( t _1, \dots , t _n)^ { \l}    \big ) $$ 
belongs to $ \big ( K ^ { 1- \l} \cdot L ^ \l  \big ) \cap \R ^n _ + $ and, since the sets $K$ and $L$  are convex (which implies $K ^ { 1 - \l} \cdot L ^ \l$ convex as well),  
for every $s = ( s_1, \dots, s _n) \in (0, 1)^n$ the vector 
$$ \big( s_1 u _ 1 ( t_1) ^ {1 - \l} v _ 1 ( t_1) ^{ \l}  , s_2 u _ 2 ( t_1, t_2)^ {1 - \l} v _ 2 ( t_1, t_2)^ { \l} , \dots, s_n u _n ( t _1, \dots , t _n)^ {1 - \l} v _n ( t _1, \dots , t _n)^ { \l}    \big ) $$ 
 belongs to $ \big ( K ^ { 1- \l} \cdot L ^ \l  \big ) \cap \R ^n _ + $. 
 Therefore,  the inequalities \eqref{f:componenti} imply  that  the vector $W_\l (t)$ in \eqref{f:generic} belongs to 
$  \big ( K ^ { 1- \l} \cdot L ^ \l  \big ) \cap \R ^n _ +$.
\end{proof}

\bigskip

\bigskip

 \subsection{Convex bodies with $n$ symmetries}\label{sec:symmetries} 

The class of convex bodies with $n$ symmetries has been 
considered in the literature  on Convex Geometry, in particular to give a partial answer to some long-standing open questions such as 
the Mahler conjecture \cite{barcor, barfra} and 
 the log-Brunn-Minkowski conjecture \cite{BoroKala}. 
 
Before giving the definition, let us recall first that a linear reflection is a map $A \in GL (n)$ which acts identically into some $(n-1)$-dimensional linear subspace $H$ of $\R ^n$, and there exists $u \in S ^ {n-1} \setminus H$ such that $A ( u) = - u$. In particular, 
an orthogonal reflection is a linear reflection $A$ which belongs to $O ( n)$.

Now, given a family  $A_1, \dots, A _n$ of linear reflections in $\R^n$,  such that the corresponding hyperplanes $H _ 1, \dots , H _n$ satisfy
$H _ 1 \cap \dots \cap H _n = \{ 0 \}$, we set
$$\mathcal Sym (A_1, \dots, A _n):= \Big \{ K \subseteq \R  ^n \ :\ A_i K = K \quad \forall i = 1, \dots, n \Big \}\,.$$

To deal with the operation of geometric combination in the class $\mathcal Sym(A_1, \dots, A _n)$, it is crucial to choose an appropriate family of directions. This requires to fix some background from \cite{BoroKala}:

\medskip

\begin{itemize} 
\item{}  If $(A_1, \dots, A _n)$ are orthogonal   reflections in $\R^n$,  such that the corresponding hyperplanes $H _ 1, \dots , H _n$ satisfy
$H _ 1 \cap \dots \cap H _n = \{ 0 \}$, we denote by  $C ( A_1, \dots, A _n)$ a {\it $n$-dimensional simplicial convex cone} (namely  the  
positive hull of $n$ linearly independent vectors) which is associated with the closure of the group generated by $(A_, \dots, A_n)$ as in \cite[Proposition 1]{BoroKala}.  If $C$ is the positive hull of $w_1, \dots, w_n$, the linear subspaces generated by $\{  w _ 1, \dots, w _n \} \setminus w _ i$, for $i = 1, \dots, n$, are called the {\it walls} of $C$.
\smallskip

\item{} If $(A_1, \dots, A _n)$ are merely linear   reflections in $\R^n$,  such that the corresponding hyperplanes $H _ 1, \dots , H _n$ satisfy
$H _ 1 \cap \dots \cap H _n = \{ 0 \}$, there  
exists a map  $\Psi \in GL (n)$ such that $A' _i:= \Psi A_i \Psi ^ { -1}$ are orthogonal reflections through hyperplanes
$H' _ 1, \dots , H' _n$ satisfying
$H' _ 1 \cap \dots \cap H' _n = \{ 0 \}$ ($\Psi$ can be found as a map which sends the L\"owner ellipsoid of $K$ into the unit ball, see the proof of \cite[Theorem 2]{BoroKala}). 
\end{itemize}

 \medskip
We refer to \cite{BoroKala} for more details, and we proceed to state the following

\begin{proposition}\label{p:inclusion2}
Let
$K$ and $L$ be convex bodies in the class $\mathcal Sym(A_1, \dots, A _n)$,   and let $\lambda \in [0,1]$.
Let $K \star_\l  L$ be their $(z_1, \dots, z_n)$-geometric combination in proportion $\lambda$, 
where the family $( z _1, \dots, z _n)$ is chosen as follows:
\begin{itemize}
\item[(a)]  if $A_i $ are orthogonal reflections, $(z_1, \dots, z_n)$ are the normals to the walls of the simplicial convex cone $C(A_1, \dots, A _n)$;  
\item[(b)] if $A_i $ are merely linear reflections, denoting by $\Psi$ a map in $GL (n)$ such that $A'
 _i:= \Psi A_i \Psi ^ { -1}$ are orthogonal reflections,  
 $(z_1, \dots, z_n)$ are the  image through $\Psi ^ {-1}$ of the normals to the walls of the simplicial convex cone $C(A
 '_1, \dots,A' _n)$. \end{itemize} 
Then $K \star_\l  L$ 
 enjoys the following properties:
\begin{itemize}
\item[(i)] it belongs to the class $\mathcal Sym(A_1, \dots, A _n)$;
\item[(ii)] it satisfies the inclusion 
\begin{equation}\label{f:inclusion2} K \star_\l  L \subseteq {( 1- \l)  \cdot K +_0 \l \cdot L }\,.
\end{equation}
\end{itemize} 
In particular, the equality  $| K \star_\l L | = |K| ^ { 1 - \l} |L| ^ \l$ given by Theorem \ref{t:nD}  implies the log-Brunn-Minkowski inequality in the class $\mathcal Sym(A_1, \dots, A _n)$
\end{proposition}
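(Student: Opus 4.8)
The plan is to reduce the case of $n$ symmetries to the unconditional case already treated in Proposition~\ref{p:inclusion1}, exploiting the careful choice of the family $(z_1,\dots,z_n)$ dictated by the walls of the simplicial cone from \cite{BoroKala}. First I would treat case (a), where the $A_i$ are orthogonal reflections and the directions $z_i$ are the normals to the walls of $C(A_1,\dots,A_n)$. The key structural fact borrowed from \cite[Proposition~1]{BoroKala} is that, when we express points of $\R^n$ in the coordinate system associated with these wall-normals, the action of each reflection $A_i$ amounts precisely to changing the sign of the $i$-th coordinate. In other words, membership of a body in $\mathcal{Sym}(A_1,\dots,A_n)$ translates, in the $(z_1,\dots,z_n)$-coordinates, into the body being \emph{unconditional}. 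This is exactly the symmetry hypothesis under which the computations at the beginning of Section~\ref{sec:unconditional} and the proof of Proposition~\ref{p:inclusion1} were carried out, since those proofs never used orthonormality of the basis but only the evenness relation \eqref{f:even} of the i.d.~field components with respect to reflection $t_i\mapsto 1-t_i$.

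Concretely, for part (i) I would verify that the components $u_i$ (and $v_i$) of the $(z_1,\dots,z_n)$-i.d.~fields of $K$ and $L$ satisfy the same evenness property \eqref{f:even}, because the symmetries $A_i K=K$ force, in wall-normal coordinates, the relevant slice-measures to be even in the $i$-th variable. The argument producing the implication \eqref{f:symmetry}, namely the change of variable $s'=1-s$ in the geometric primitive $w_{\l,i}$, then applies verbatim and shows that $K\star_\l L$ is invariant under each $A_i$, hence belongs to $\mathcal{Sym}(A_1,\dots,A_n)$. For part (ii), the inclusion $K\star_\l L\subseteq (1-\l)\cdot K+_0\l\cdot L$ again follows by first establishing the pointwise bound \eqref{f:componenti} via H\"older's inequality with exponents $\frac{1}{1-\l}$ and $\frac{1}{\l}$, and then invoking convexity of $K$ and $L$ (in wall-normal coordinates) together with the analogue of the product-body inclusion into the $0$-sum. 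Here one must check that the appropriate generalization of \cite[Lemma~4.1]{Sar} holds relative to the chosen directions; this amounts to comparing support functions along the $z_i$ and is the natural place where the geometry of the cone $C(A_1,\dots,A_n)$ enters.

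For case (b), where the $A_i$ are merely linear reflections, I would apply the linear change of coordinates $\Psi\in GL(n)$ that conjugates them into orthogonal reflections $A'_i=\Psi A_i\Psi^{-1}$, and transport everything through $\Psi$. Since the geometric combination operation is covariant under invertible linear maps in the sense that $\Psi(K)\star_\l\Psi(L)=\Psi(K\star_\l L)$ when the directions are transformed by $\Psi$ (this covariance is built into the construction, as the i.d.~fields and geometric potentials only depend on the directions $z_i$ and the volume form), the problem for $K,L$ in the $(z_1,\dots,z_n)$-directions reduces to the already-solved orthogonal problem for $\Psi(K),\Psi(L)$ in the directions $\Psi(z_i)$, which are the wall-normals of $C(A'_1,\dots,A'_n)$. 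Both conclusions (i) and (ii) then descend back through $\Psi^{-1}$, using that the $0$-sum is also covariant under $GL(n)$.

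The main obstacle I expect is precisely the covariance bookkeeping in step (b), together with making rigorous the claim that membership in $\mathcal{Sym}(A_1,\dots,A_n)$ becomes unconditionality in wall-normal coordinates. This is the crux: one must invoke the structure of the closure of the group generated by the reflections and the associated simplicial cone from \cite[Proposition~1]{BoroKala} to guarantee that the walls are permuted among themselves by the group action and that, in the dual basis of wall-normals, each $A_i$ acts as a single coordinate sign-flip. Once this translation is secured, the rest of the proof is a faithful transcription of the unconditional argument, so the essential work lies in setting up the coordinate dictionary correctly rather than in any new estimate.
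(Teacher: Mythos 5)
Your proposal founders on precisely the step you yourself flagged as the crux: the claimed dictionary that membership in $\mathcal Sym(A_1,\dots,A_n)$ becomes \emph{unconditionality} in the wall-normal coordinates, i.e.\ that each $A_i$ acts as a coordinate sign flip in the basis $(z_1,\dots,z_n)$. This is false in general, for a simple group-theoretic reason: if each $A_i$ acted as a sign flip in some common linear coordinates, the group generated by $A_1,\dots,A_n$ would embed in the abelian sign-flip group of order $2^n$; but already in the plane two mirrors meeting at angle $\pi/3$ generate the non-abelian dihedral group of order $6>4$ (and mirrors at an irrational angle generate a group whose closure is all of $O(2)$, which is why the closure enters in \cite{BoroKala} at all). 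Concretely, in the wall-normal coordinates of the fundamental cone the generating reflections act as shear-type maps of the form $(x_1,x_2)\mapsto(x_1,\,x_1-x_2)$, so slices of a symmetric body are symmetric about moving centers, not about the origin. Moreover, if your dictionary were correct, case (a) would follow from Proposition \ref{p:inclusion1} by pure $GL(n)$-covariance of the $0$-sum and of volume ratios, and the entire apparatus of \cite{BoroKala} (Proposition 1(v), Lemma 6, Theorem 8) that both that paper and this one rely on would be superfluous — a strong sign that the reduction cannot be this direct.

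The paper's actual route is correspondingly different where it matters. For (i) it encodes the symmetry not as unconditionality of $K$ but as the oddness relations \eqref{f:pari} of the i.d.\ components about $t_i=1/2$, and propagates them to the geometric potential via the change of variable $s'=1-s$, obtaining \eqref{f:pari2}; your computation for this step is the same, but your justification of the input relations is not. For (ii) the paper never compares $K\star_\l L$ with the $0$-sum globally in the skew coordinates. Instead, using (i) and the covariance of the $0$-sum, it reduces to the fundamental cone $C$ (inclusion \eqref{f:inclusionC}), transports by $\Phi$, and invokes \cite[Proposition 1(v)]{BoroKala}: the piece $\Phi(C\cap K)$ extends to an \emph{unconditional convex body} $\overline K$ — a genuinely weaker and subtler statement than yours, since $\overline K$ is not a linear image of $K$. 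It then uses \cite[Theorem 8 and Lemma 6(ii)]{BoroKala} to relate the $0$-sum of $\overline K,\overline L$ on the positive orthant to that of $K,L$ on $C$, proves the key matching identity \eqref{f:finale} (the geometric combination commutes with this unconditionalization on the cone, thanks to the $t_i=1/2$ wall correspondence), and only then applies Proposition \ref{p:inclusion1} — including the H\"older bound \eqref{f:componenti} and \cite[Lemma 4.1]{Sar} — to the honest unconditional bodies $\overline K,\overline L$ in orthogonal coordinates. Your suggestion to prove instead "a generalization of Saroglou's lemma relative to the directions $z_i$" has no justification offered and no counterpart in the paper; without the cone restriction and the $\overline K$ construction, the convexity of the product body and the support-function comparison you invoke are unsupported in skew coordinates. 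Your case (b), by contrast, is fine and matches the paper: the construction is covariant under $\Psi\in GL(n)$ acting simultaneously on bodies and directions, which is exactly the paper's identity \eqref{f:Psi}.
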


\begin{proof} 
Let us prove the statement 
first in case $A _i$ are orthogonal reflections and then in case they are merely linear reflections.

\smallskip
-- {\it Case (a). ($A _i$ orthogonal reflections)}. 

\smallskip
(i) In order to check that $K \star_\l  L$ belongs to the class $\mathcal Sym(A_1, \dots, A _n)$, let  us write  the sets $K$, $L$ and $K \star_\l L$ as
$$
\begin{array}{ll}
& K = \Big \{  u_ 1 ( t_1) z _ 1+  u _ 2 ( t_1, t _ 2) z _2 +  \dots + u _n ( t_1, \dots, t _n)z _n    \ :\ t _ i \in (0,1)\Big \}  \, , 
\\
\noalign{\bigskip} 
& L = \Big \{  v_ 1 ( t_1) z _ 1+  v _ 2 ( t_1, t _ 2) z _2 +  \dots + v _n ( t_1, \dots, t _n)z _n    \ :\ t _ i \in (0,1)\Big \}  \, , 
\\ 
\noalign{\bigskip}
& K \star_\l  L =  \Big \{  w_ {\l,1 } ( t_1) z _ 1+  w _ {\l,2} ( t_1, t _ 2) z _2 +  \dots + w _{\l,n}  ( t_1, \dots, t _n)z _n    \ :\ t _ i \in (0,1)\Big \}\,,
 \end{array}
$$
where
$U = u _ 1z_1+  \dots + u _n z _n $, $V= v_1 z _1 + \dots + v _n z _n$ are the $(z_1, \dots, z _n)$-inverse distribution fields of $K$ and $L$, 
and  $W_\l = w _ {\l,1} z _ 1 + \dots + w _{\l,n} z _n$ is the standard geometric potential   of $(U, V)$ in proportion $\lambda$. 

The fact that $K$ and $L$ belong to $\mathcal Sym (A_1, \dots, A _n)$ can be expressed as the system of equalities 
\begin{equation}\label{f:pari} \begin{array}{ll} 
& u _ i  ( t_1, \dots,t_{i-1},  s) =  -  u _i ( t_1, \dots,t_{i-1},  1-s) \qquad \forall s \in (0,1)\,,
\\ 
\noalign{\bigskip} 
& v _ i  ( t_1, \dots,t_{i-1},  s) =  -  v _i ( t_1, \dots,t_{i-1},  1-s) \qquad \forall s \in (0,1)\,.
 \end{array}
 \end{equation}
Then,  proceeding in the same way as in the first part of the proof of Proposition \ref{p:inclusion1}, we see that the functions $w_{\l,i}$ 
continue to satisfy the  analogous equalities:
\begin{equation}\label{f:pari2} w _{\l, i}   ( t_1, \dots,t_{i-1},  s) =  -  w _{\l,i}  ( t_1, \dots,t_{i-1},  1-s) \qquad \forall s \in (0,1)\,,\end{equation}
which implies that also $K \star_\l  L $ belongs to $\mathcal Sym (A_1, \dots, A _n)$. 

\smallskip
(ii) By statement (i) already proved, we know that $ K \star_\l  L$  belongs to $\mathcal Sym (A_1, \dots, A _n)$. As well, since the $0$-sum is linear covariant (i.e.\ $A_i(( 1- \l)  \cdot K +_0 \l \cdot L)) = ( 1- \l)  \cdot A_i (K ) +_0 \l \cdot A_i (L)$), we have that
 $( 1- \l)  \cdot K +_0 \l \cdot L$ belongs to $\mathcal Sym (A_1, \dots, A _n)$. 
Therefore, in order to prove the inclusion \eqref{f:inclusion2}, we are reduced to show that, denoting for brevity by $C$ the simplicial convex cone $C ( A _1, \dots, A _n)$, it holds that
\begin{equation}\label{f:inclusionC} C \cap  ( K \star_\l  L)  \subseteq  C \cap   
\big ({( 1- \l)  \cdot K +_0 \l \cdot L } \big )  \,.
\end{equation}
In turn, to have \eqref{f:inclusionC} it is enough to show that
\begin{equation*}%\label{f:inclusionsC2} 
\Phi \big ( C \cap ( K \star_\l  L)  \big ) \subseteq \Phi \big ( C \cap \big (  {( 1- \l)  \cdot K +_0 \l \cdot L } \big ) \big ) \,,
\end{equation*} 
where $\Phi$ is a map in $GL (n)$ with $\Phi ( z _ i) = e _ i$, so that $\Phi$ maps $C$ into $\R ^ n _+$
(recall that $( z_1, \dots, z _n)$ denote the normals to the walls of  $C$).

Since $K$ and $L$ are invariant under the closure of the group generated by $(A_1, \dots, A_n)$,  
by \cite[Proposition 1 (v)]{BoroKala} we know that
the unconditional sets $\overline K$  and $\overline L$ defined  by 
$$ \R ^n _+ \cap  \overline K  := \Phi ( C \cap K) \qquad \text{ and } \qquad 
\R ^n _+ \cap \overline L  := \Phi (  C \cap L) $$
are unconditional convex bodies.

Moreover, from the proof of Theorem 8 and Lemma  6 (ii) in \cite{BoroKala}, we know that 
$$\begin{array}{ll}
 \displaystyle \R ^ n _+ \cap \big (  {( 1- \l)  \cdot \overline K +_0 \l \cdot \overline  L } \big ) &  \displaystyle \subseteq \Phi 
 \big ( \big \{ x \in C \ :\ \langle x, u \rangle \leq h _K ( u) ^ { 1 - \l} h _ L ( u) ^ \l  \ \forall u \in C  \big \} \big ) 
 \\ \noalign{\medskip} &  \displaystyle = \Phi \big ( C \cap \big (  {( 1- \l)  \cdot K +_0 \l \cdot L } \big ) \big )   \,.
 \end{array}
$$  
We are thus reduced to prove that
\begin{equation}\label{f:uncon} 
\Phi \big ( C \cap ( K \star_\l  L)  \big ) \subseteq \R ^ n _+ \cap \big (  {( 1- \l)  \cdot \overline K +_0 \l \cdot \overline  L } \big )\,.
\end{equation} The inclusion \eqref{f:uncon} will follow from Proposition  \ref{p:inclusion1} applied to the unconditional convex bodies $\overline K$ and $\overline L$,  provided we are able to show that 
\begin{equation}\label{f:finale} 
\Phi \big ( C \cap ( K \star_\l  L)  \big )= \R ^ n _+  \cap  (\overline K \star_\l \overline L ) \,, 
\end{equation} 
where $\overline K \star_\l \overline L$ is the $(e_1, \dots , e_n)$-geometric combination  of $\overline K$ and $\overline L$.
 
We emphasize that the families of vectors with respect to which the two geometric combinations appearing in \eqref{f:finale} are constructed are distinguished, and their indication is omitted just for notational simplicity.  For the sake of clearness, let us
repeat that $K \star_\l  L$  
is the $(z_1, \dots , z_n)$-geometric combination of $K$ and $L$, with  $(z_1, \dots, z_n)$  chosen as specified in the statement of Proposition \ref{p:inclusion2}, while $\overline K \star_\l \overline L$ is the $(e_1, \dots , e_n)$-geometric combination  of the unconditional bodies $\overline K$ and $\overline L$.

Let us check the equality \eqref{f:finale}.  By the system of equations \eqref{f:pari}-\eqref{f:pari2}, we see that 
$$u _ i  ( t_1, \dots,t_{i-1},  1/2 ) = v _ i  ( t_1, \dots,t_{i-1},  1/2 )  = w_{\l,  i}  ( t_1, \dots,t_{i-1},  1/2 )  =  0 \quad \forall i = 1, \dots n\,.$$ 
Hence,  the intersections of  the walls of $C$ with $K$, $L$ and $K \star _\l L$ are given respectively by
 $$K \cap \Big \{ t _ i = \frac{1}{2 } \Big \} \, ,  \quad L \cap \Big \{ t _ i = \frac{1}{2} \Big \}\, , \quad (K \star _\l L)  \cap\Big \{ t _ i = \frac{1}{2 }\Big \}\,.$$ 
Thus, 
$$\begin{array}{ll}
&C \cap K = \Big \{  u_ 1 ( t_1) z _ 1+  u _ 2 ( t_1, t _ 2) z _2 +  \dots + u _n ( t_1, \dots, t _n)z _n    \ :\ t _ i \in (1/2,1)\Big \}  \, , 
\\
\noalign{\bigskip} 
&C \cap  L = \Big \{  v_ 1 ( t_1) z _ 1+  v _ 2 ( t_1, t _ 2) z _2 +  \dots + v _n ( t_1, \dots, t _n)z _n    \ :\ t _ i \in (1/2,1)\Big \}  \, , 
\\ 
\noalign{\bigskip}
& C \cap ( K \star_\l  L) =  \Big \{  w_ {\l,1} ( t_1) z _ 1+  w _ {\l,2} ( t_1, t _ 2) z _2 +  \dots + w _{l,n} ( t_1, \dots, t _n)z _n    \ :\ t _ i \in (1/2,1)\Big \}\,.
 \end{array}
$$ 
By applying the map $\Phi$ to the last equality above, we see that the set at the left-hand side of \eqref{f:finale} satisfies
\begin{equation}\label{f:prima}  \Phi ( C \cap ( K \star_\l  L ) ) 
   \!\!= \!\!\Big \{ w_ {\l,1} ( t_1) e _ 1+  w _ {\l, 2} ( t_1, t _ 2) e _2 +  \dots + w _{\l,n} ( t_1, \dots, t _n)e _n    \, :\, t _ i \in (1/2,1) \Big \}\,.
   \end{equation} 

On the other hand, we have 
 $$\begin{array}{ll}  \R^ n _+ \cap \overline K   & =   \Phi (C \cap  K ) = \Phi\Big (  \Big \{  u_ 1 ( t_1) z _ 1+  u _ 2 ( t_1, t _ 2) z _2 +  \dots + u _n ( t_1, \dots, t _n)z _n    \ :\ t _ i \in ({1}/{2},1)\Big \}   
 \Big ) \\ \noalign{\bigskip} & =   \Big \{  u_ 1 ( t_1) e _ 1+  u _ 2 ( t_1, t _ 2) e _2 +  \dots + u _n ( t_1, \dots, t _n)e _n    \ :\ t _ i \in (1/2,1)\Big \}   
 \end{array}$$ 
 and similarly for $L$. 
 In view of the equalities \eqref{f:pari}, we infer that 
 $$\begin{array}{ll} 
 & \overline K = \Big \{  u_ 1 ( t_1) e _ 1+  u _ 2 ( t_1, t _ 2) e _2 +  \dots + u _n ( t_1, \dots, t _n)e _n    \ :\ t _ i \in (0,1)\Big \} \,,   
 \\  \noalign{\medskip} 
 & \overline L = \Big \{  v_ 1 ( t_1) e _ 1+  v _ 2 ( t_1, t _ 2) e _2 +  \dots + v _n ( t_1, \dots, t _n)e _n    \ :\ t _ i \in (0,1)\Big \}   \,. 
 \end{array}
 $$ 
It follows straightforwardly that 
 $\overline U := u _ 1 ( t_1) e _ 1+   u _ 2 ( t_1, t _ 2) e _2 +  \dots + u _n ( t_1, \dots, t _n)e _n $ and 
 $\overline V := v_ 1 ( t_1) e _ 1+  v _ 2 ( t_1, t _ 2) e _2 +  \dots + v _n ( t_1, \dots, t _n)e _n $ are respectively the $( e _1, \dots, e _n)$-i.d.~fields of the unconditional bodies $\overline K$ and $\overline L$. 
Therefore, 
  $$ \overline K \star _{\l} \overline L = \Big \{  w_ {\l,1} ( t_1) e _ 1+  w _ {\l,2} ( t_1, t _ 2) e _2 +  \dots + w _{\l,n} ( t_1, \dots, t _n)e _n    \ :\ t _ i \in (0,1)\Big \}\,.\\
    $$  
Hence, the set at the right-hand side of \eqref{f:finale} satisfies
\begin{equation}\label{f:seconda}  \R ^ n _+  \cap  (\overline K \star_\l \overline L ) 
   = \Big \{  w_ {\l,1} ( t_1) e _ 1+  w _ {\l,2} ( t_1, t _ 2) e _2 +  \dots + w _{\l,n} ( t_1, \dots, t _n)e _n    \ :\ t _ i \in (1/2,1)\Big \}\,.
   \end{equation} 
 By combining \eqref{f:prima} and \eqref{f:seconda}, equality \eqref{f:finale} follows.

\smallskip

 \bigskip
 -- {\it Case (b) ($A _i$ linear reflections)}. Let $\Psi$ be a map in $GL (n)$ such that $A' _i:= \Psi A_i \Psi ^ { -1}$ are orthogonal reflections. 
Let $\eta _i$ be the normals to the walls of the simplicial convex cone $C(A'_1, \dots, A' _n)$, so that  
$(z_1, \dots, z_n)  := \Psi ^ {-1} (\eta _1, \dots, \eta _n)$. 
Statements (i)-(ii)  readily follow from the corresponding items already proved in Case (a) provided we  
 show that 
  \begin{equation}\label{f:Psi} \Psi (K \star_\l  L) = \Psi (K) \star_\l \Psi ( L)\,, \end{equation}  
  where the geometric combination $K \star_\l  L$ is made with respect to $(z_1, \dots, z_n)$, and the geometric combination  $\Psi (K) \star_\l \Psi ( L)$ is made with respect to  $(\eta_1, \dots, \eta _n)$.  
 
 To prove \eqref{f:Psi}, we start by writing  $K \star_\l  L$ as
\[
K \star_\l  L =  \Big \{  w_ {\l,1} ( t_1) z _ 1+  w _ {\l,2} ( t_1, t _ 2) z _2 +  \dots + w _{\l,n} ( t_1, \dots, t _n)z _n    \ :\ t _ i \in (0,1)\Big \}\,,
\] 
 where $w _{\l, i}$ are the components of the standard $(z_1, \dots, z _n)$-geometric potential of the 
 $(z_1, \dots, z _n)$-i.d.~fields 
 of $K$ and $L$. 
 
Then
$$\Psi(K \star_\l  L) =  \Big \{  w_ {\l,1} ( t_1) \eta _ 1+  w _ {\l,2} ( t_1, t _ 2) \eta _2 +  \dots + w _{\l,n} ( t_1, \dots, t _n)\eta _n    \ :\ t _ i \in (0,1)\Big \}\,,\\
    $$
 so that \eqref{f:Psi} holds true provided that $w _{\l, i}$ are also  the components of the standard $(\eta_1, \dots, \eta _n)$-geometric potential of the 
 $(\eta_1, \dots, \eta _n)$-i.d.~fields  of $\Psi ( K)$ and $\Psi ( L)$. 
 In turn, this is true provided the following implication holds: if 
 $U (t)= u _ 1 ( t_1) z _ 1+   u _ 2 ( t_1, t _ 2) z _2 +  \dots + u _n ( t_1, \dots, t _n)z _n$ is the   
 $(z_1, \dots, z _n)$-i.d.~field of $K$, then $\widetilde U (t)= u _ 1 ( t_1) \eta _ 1+   u _ 2 ( t_1, t _ 2) \eta _2 +  \dots + u _n ( t_1, \dots, t _n)\eta  _n$ is the   
 $(\eta_1, \dots, \eta _n)$-i.d.~field of $\Psi(K)$  (and similarly for $L$).  Such implication follows immediately from Definition \ref{d:field}.
\end{proof}

\bigskip
\section{Open problems} \label{sec:open}

\begin{itemize}
\item{}
\textit{Problem 4.1: Continuous interpolations.} 
Given two  functions
$f,g\colon\R^n\to\R_+$, with strictly positive integrals and i.d.~fields $U, V$ respectively,  
construct a family of fields  $\{ W _\lambda \} _{\lambda \in [0, 1]}$ 
such that,  if $f \star _\l g$ is defined according to \eqref{f:nconv}, there holds:  

\begin{itemize}

\item[(i)]  For $\lambda = 0$ and $\lambda = 1$, we have $f \star _0 g = f$ and $f \star _ 1 g = g$. 
\smallskip 

\item[(ii)] The map $[0, 1] \ni \lambda \mapsto f \star _\l g $ is continuous  in $L ^ 1 ( \R^n)$. 
\smallskip

\item[(iii)]  For every $\l \in [0, 1]$,  $\displaystyle \int _{\R^n} f  \star_\l g  ( x) \, dx  = \Big (\int _{\R ^n} f ( x) \, dx \Big )^ { 1 - \l}  \Big (\int _{\R^n} g ( x) \, dx  \Big )^ {\l}.$

\end{itemize} 

\smallskip 
\noindent In this respect we have seen that, in dimension $n=1$, taking  $w _\l$ equal to a geometric primitive of $(u,v)$ in proportion $\lambda$, property (iii) is always satisfied by Theorem \ref{t:1D}, whereas properties (i)-(ii) are satisfied in case  $u, v$ belong to $AC_{\rm loc} (0, 1)$,  but they are not in case the distributional derivatives of $u$ or $v$
contain jumps or Cantor parts
 (cf.~Remark \ref{r:ac} and Proposition \ref{p:contl}). Moreover, we recall that (i)-(ii)-(iii) hold in any space dimension when $f$ and $g$ are the characteristic functions of two unconditional convex bodies, see Remark \ref{r:interpolation}.

\bigskip 
\item{} {\it Problem 4.2: Convexity preserving of geometric combination in dimension $n >2$.} 
Establish whether  (at least under the additional assumption that $K$ and $L$ are unconditional), Proposition \ref{p:convexity} continues to hold in dimension $n>2$.  
\bigskip 
\item{} {\it Problem 4.3: Comparison between geometric combination and $0$-sum.}  Establish whether (at least in dimension $n=2$) the inclusion \eqref{f:inclusion} in Proposition \ref{p:inclusion1} continues to hold for  arbitrary convex bodies (not necessarily unconditional), provided that the family of directions
$(z_1, \dots, z_n)$ needed to construct  $K \star_\l L$
 is suitably chosen.

\end{itemize}

\def\cprime{$'$}
\begin{bibdiv}
\begin{biblist}

\bib{ArFlSe}{article}{
      author={Artstein-Avidan, {S}.},
      author={Florentin, {D.I.}},
      author={Segal, {A.}},
       title={Functional {B}runn-{M}inkowski inequalities induced by polarity},
        date={2020},
     journal={Adv. Math.},
      volume={364},
       pages={107006, 19},
}

\bib{ballLN}{incollection}{
      author={Ball, {K.M.}},
       title={Some remarks on the geometry of convex sets},
        date={1988},
   booktitle={Geometric aspects of functional analysis (1986/87)},
      series={Lecture Notes in Math.},
      volume={1317},
       pages={224\ndash 231},
}

\bib{BB10}{article}{
      author={Ball, {K.M.}},
      author={B\"{o}r\"{o}czky, {K.J.}},
       title={Stability of the {P}r\'{e}kopa-{L}eindler inequality},
        date={2010},
     journal={Mathematika},
      volume={56},
      number={2},
       pages={339\ndash 356},
}

\bib{BB11}{article}{
      author={Ball, {K.M.}},
      author={B\"{o}r\"{o}czky, {K.J.}},
       title={Stability of some versions of the {P}r\'{e}kopa-{L}eindler
  inequality},
        date={2011},
     journal={Monatsh. Math.},
      volume={163},
      number={1},
       pages={1\ndash 14},
}

\bib{Bar}{article}{
      author={Barthe, {F.}},
       title={Autour de l'in\'{e}galit\'{e} de {B}runn-{M}inkowski},
        date={2003},
     journal={Ann. Fac. Sci. Toulouse Math. (6)},
      volume={12},
      number={2},
       pages={127\ndash 178},
}

\bib{barcor}{article}{
      author={Barthe, {F.}},
      author={Cordero-Erausquin, {D.}},
       title={Invariances in variance estimates},
        date={2013},
     journal={Proc. Lond. Math. Soc. (3)},
      volume={106},
      number={1},
       pages={33\ndash 64},
}

\bib{barfra}{article}{
      author={Barthe, {F.}},
      author={Fradelizi, {M.}},
       title={The volume product of convex bodies with many hyperplane
  symmetries},
        date={2013},
     journal={Amer. J. Math.},
      volume={135},
      number={2},
       pages={311\ndash 347},
}

\bib{Ber}{book}{
      author={Bergh, {J.}},
      author={L\"{o}fstr\"{o}m, {J.}},
       title={Interpolation spaces. an introduction.},
   publisher={Springer. Berlin},
        date={1976},
}

\bib{BoCoFr}{article}{
      author={Bobkov, S.~G.},
      author={Colesanti, A.},
      author={Fragal\`a, I.},
       title={Quermassintegrals of quasi-concave functions and generalized
  {P}r\'{e}kopa-{L}eindler inequalities},
        date={2014},
     journal={Manuscripta Math.},
      volume={143},
      number={1-2},
       pages={131\ndash 169},
}

\bib{Boll}{incollection}{
      author={Bollob\'{a}s, B.},
      author={Leader, I.},
       title={Products of unconditional bodies},
        date={1995},
   booktitle={Geometric aspects of functional analysis ({I}srael, 1992--1994)},
      series={Oper. Theory Adv. Appl.},
      volume={77},
   publisher={Birkh\"{a}user, Basel},
       pages={13\ndash 24},
}

\bib{borellLN}{article}{
      author={Borell, {C.}},
       title={Convex set functions in {$d$}-space},
        date={1975},
     journal={Period. Math. Hungar.},
      volume={6},
      number={2},
       pages={111\ndash 136},
}

\bib{BD21}{article}{
      author={B\"{o}r\"{o}czky, {K.J.}},
      author={De, {A.}},
       title={Stability of the {P}r\'{e}kopa-{L}eindler inequality for
  log-concave functions},
        date={2021},
     journal={Adv. Math.},
      volume={386},
}

\bib{BoroKala}{article}{
      author={B\"or\"oczky, {K.J.}},
      author={Kalantzopoulos, {P.}},
       title={Log-{B}runn-{M}inkowski inequality under symmetry},
        date={2020},
        note={preprint arXiv:2002.12239},
}

\bib{BLYZ}{article}{
      author={B\"{o}r\"{o}czky, {K.J.}},
      author={Lutwak, {E.}},
      author={Yang, {D.}},
      author={Zhang, {G.}},
       title={The log-{B}runn-{M}inkowski inequality},
        date={2012},
     journal={Adv. Math.},
      volume={231},
      number={3-4},
       pages={1974\ndash 1997},
}

\bib{BrLi}{article}{
      author={Brascamp, {H.J.}},
      author={Lieb, {E.H.}},
       title={On extensions of the {B}runn-{M}inkowski and
  {P}r\'{e}kopa-{L}eindler theorems, including inequalities for log concave
  functions, and with an application to the diffusion equation},
        date={1976},
     journal={J. Functional Analysis},
      volume={22},
      number={4},
       pages={366\ndash 389},
}

\bib{BF14}{article}{
      author={Bucur, {D.}},
      author={Fragal\`a, {I.}},
       title={Lower bounds for the {P}r\'{e}kopa-{L}eindler deficit by some
  distances modulo translations},
        date={2014},
     journal={J. Convex Anal.},
      volume={21},
      number={1},
       pages={289\ndash 305},
}

\bib{ColLiv}{incollection}{
      author={Colesanti, {A.}},
      author={Livshyts, {G.V.}},
       title={A note on the quantitative local version of the
  log-{B}runn-{M}inkowski inequality},
        date={2020},
   booktitle={The mathematical legacy of {V}ictor {L}omonosov},
      series={Adv. Anal. Geom.},
      volume={2},
   publisher={De Gruyter, Berlin},
       pages={85\ndash 98},
}

\bib{ColLivMar}{article}{
      author={Colesanti, {A.}},
      author={Livshyts, {G.V.}},
      author={Marsiglietti, {A.}},
       title={On the stability of {B}runn-{M}inkowski type inequalities},
        date={2017},
     journal={J. Funct. Anal.},
      volume={273},
      number={3},
       pages={1120\ndash 1139},
}

\bib{cordero}{article}{
      author={Cordero-Erausquin, {D.}},
      author={Fradelizi, {M.}},
      author={Maurey, {B.}},
       title={The ({B}) conjecture for the {G}aussian measure of dilates of
  symmetric convex sets and related problems},
        date={2004},
     journal={J. Funct. Anal.},
      volume={214},
      number={2},
       pages={410\ndash 427},
}

\bib{CorKla}{incollection}{
      author={Cordero-Erausquin, {D.}},
      author={Klartag, {B.}},
       title={Interpolations, convexity and geometric inequalities},
        date={2012},
   booktitle={Geometric aspects of functional analysis},
      series={Lecture Notes in Math.},
      volume={2050},
   publisher={Springer, Heidelberg},
       pages={151\ndash 168},
}

\bib{dubuc}{article}{
      author={Dubuc, {S.}},
       title={Crit\`eres de convexit\'{e} et in\'{e}galit\'{e}s
  int\'{e}grales},
        date={1977},
     journal={Ann. Inst. Fourier (Grenoble)},
      volume={27},
      number={1},
       pages={x, 135\ndash 165},
}

\bib{EmHo}{article}{
      author={Embrechts, {P.}},
      author={Hofert, {M.}},
       title={A note on generalized inverses},
        date={2013},
        ISSN={1432-2994},
     journal={Math. Methods Oper. Res.},
      volume={77},
      number={3},
       pages={423\ndash 432},
         url={https://doi.org/10.1007/s00186-013-0436-7},
      review={\MR{3072795}},
}

\bib{FMP09}{article}{
      author={Figalli, {A.}},
      author={Maggi, {F.}},
      author={Pratelli, {A.}},
       title={A refined {B}runn-{M}inkowski inequality for convex sets},
        date={2009},
     journal={Ann. Inst. H. Poincar\'{e} Anal. Non Lin\'{e}aire},
      volume={26},
      number={6},
       pages={2511\ndash 2519},
}

\bib{Ga}{article}{
      author={Gardner, R.~J.},
       title={The {B}runn-{M}inkowski inequality},
        date={2002},
     journal={Bull. Amer. Math. Soc. (N.S.)},
      volume={39},
      number={3},
       pages={355\ndash 405},
}

\bib{GSS12}{article}{
      author={Greco, {L.}},
      author={Sbordone, {C.}},
      author={Schiattarella, {R.}},
       title={Composition of bi-{S}obolev homeomorphisms},
        date={2012},
     journal={Proc. Roy. Soc. Edinburgh Sect. A},
      volume={142},
      number={1},
       pages={61\ndash 80},
}

\bib{knothe}{article}{
      author={Knothe, {H.}},
       title={Contributions to the theory of convex bodies},
        date={1957},
     journal={Michigan Math. J.},
      volume={4},
       pages={39\ndash 52},
}

\bib{KolMil}{misc}{
      author={Kolesnikov, {A.V}},
      author={Milman, {E.}},
       title={Local $l^p$-brunn-minkowski inequalities for $p<1$},
        date={2018},
        note={preprint arXiv:1711.01089v3},
}

\bib{ledoux}{book}{
      author={Ledoux, {M.}},
       title={The concentration of measure phenomenon},
      series={Mathematical Surveys and Monographs},
   publisher={American Mathematical Society, Providence, RI},
        date={2001},
      volume={89},
}

\bib{Le}{article}{
      author={Leindler, {L.}},
       title={On a certain converse of {H}\"{o}lder's inequality. {II}},
        date={1972},
     journal={Acta Sci. Math. (Szeged)},
      volume={33},
      number={3-4},
       pages={217\ndash 223},
}

\bib{Mar}{article}{
      author={Marsiglietti, {A.}},
       title={Borell's generalized {P}r\'{e}kopa-{L}eindler inequality: a
  simple proof},
        date={2017},
        ISSN={0944-6532},
     journal={J. Convex Anal.},
      volume={24},
      number={3},
       pages={807\ndash 817},
      review={\MR{3684803}},
}

\bib{MilRot}{incollection}{
      author={Milman, {V.}},
      author={Rotem, {L.}},
       title={Non-standard constructions in convex geometry: geometric means of
  convex bodies},
        date={2017},
   booktitle={Convexity and concentration},
      series={IMA Vol. Math. Appl.},
      volume={161},
   publisher={Springer, New York},
       pages={361\ndash 390},
}

\bib{Pr1}{article}{
      author={Pr\'{e}kopa, {A.}},
       title={Logarithmic concave measures with application to stochastic
  programming},
        date={1971},
     journal={Acta Sci. Math. (Szeged)},
      volume={32},
       pages={301\ndash 316},
}

\bib{Pr2}{article}{
      author={Pr\'{e}kopa, {A.}},
       title={On logarithmic concave measures and functions},
        date={1973},
     journal={Acta Sci. Math. (Szeged)},
      volume={34},
       pages={335\ndash 343},
}

\bib{Pr3}{article}{
      author={Pr\'{e}kopa, {A.}},
       title={New proof for the basic theorem of logconcave measures},
        date={1975},
     journal={Alkalmaz. Mat. Lapok},
      volume={1},
      number={3-4},
       pages={385\ndash 389},
}

\bib{Rot}{misc}{
      author={Rotem, {L.}},
       title={A letter: the log-brunn-minkowski inequality for convex bodies},
        date={2014},
        note={preprint arXiv:1412.5321},
}

\bib{Sar2}{article}{
      author={Saroglou, {C.}},
       title={More on logarithmic sums of convex bodies},
        date={2014},
        note={preprint arXiv:1409.4346 [},
}

\bib{Sar}{article}{
      author={Saroglou, {C.}},
       title={Remarks on the conjectured log-{B}runn-{M}inkowski inequality},
        date={2015},
     journal={Geom. Dedicata},
      volume={177},
       pages={353\ndash 365},
}

\bib{Sch}{book}{
      author={Schneider, {R.}},
       title={Convex bodies: the {B}runn--{M}inkowski theory},
   publisher={Cambridge Univ.~Press},
     address={Cambridge},
        date={1993},
}

\bib{uhrin}{article}{
      author={Uhrin, {B.}},
       title={Curvilinear extensions of the {B}runn-{M}inkowski-{L}usternik
  inequality},
        date={1994},
     journal={Adv. Math.},
      volume={109},
      number={2},
       pages={288\ndash 312},
}

\bib{Vi}{book}{
      author={Villani, {C.}},
       title={Topics in optimal transportation},
      series={Graduate Studies in Mathematics},
   publisher={American Mathematical Society, Providence, RI},
        date={2003},
      volume={58},
}

\bib{XiLeng}{article}{
      author={Xi, {D.}},
      author={Leng, {G.}},
       title={Dar's conjecture and the log-{B}runn-{M}inkowski inequality},
        date={2016},
     journal={J. Differential Geom.},
      volume={103},
      number={1},
       pages={145\ndash 189},
}

\bib{Yeh}{book}{
      author={Yeh, J.},
       title={Real analysis},
     edition={Third},
   publisher={World Scientific Publishing Co. Pte. Ltd., Hackensack, NJ},
        date={2014},
        ISBN={978-981-4578-54-7},
         url={https://doi.org/10.1142/9037},
        note={Theory of measure and integration},
      review={\MR{3308472}},
}

\end{biblist}
\end{bibdiv}

\end{document}